\documentclass[final]{siamart251216}
\usepackage{graphicx}
\usepackage{amsfonts,amsmath} 
\usepackage{amssymb,mathrsfs,mathdots,latexsym} 

\usepackage[square,numbers,sort]{natbib} 
\usepackage{multirow}

\usepackage{array}
\usepackage{booktabs} 
\usepackage{enumitem} 
\usepackage{xcolor} 
\usepackage{microtype} 
\usepackage{algorithm} 
\usepackage{algorithmic} 
\usepackage{multicol} 
\usepackage[utf8]{inputenc}
\usepackage[T2A]{fontenc}
\usepackage[russian,english]{babel}

\definecolor{brilliantrose}{rgb}{1.0, 0.33, 0.64}
\definecolor{myviolet}{rgb}{0.21, 0.0, 0.85}
\definecolor{amethyst}{rgb}{0.6, 0.4, 0.8}
\definecolor{carrotorange}{rgb}{0.93, 0.57, 0.13}
\definecolor{cutepink}{rgb}{1.0, 0.2, 0.6}

\counterwithout{algorithm}{section}

\newsiamremark{remark}{Remark}
\newsiamremark{example}{Example}
\newsiamremark{claim}{Claim}

\usepackage{marvosym}

\def\Cminus{\mathbb C_-}

\def\diag{\mathrm{diag}}

\title{Fast stability tests for Hermitian matrix polynomials}

\author{Vanni Noferini\thanks{Department of Mathematics and Systems Analysis, Aalto University, P.O. Box 11100, FI-00076, Aalto, Finland (\email{vanni.noferini@aalto.fi}). Vanni Noferini was supported by a Research Council of Finland grant (decision number 370932).}
\and Xuzhou Zhan\thanks{Corresponding author. Department of Mathematics, Beijing Normal University at Zhuhai, Zhuhai 519087, PR China (\email{xzzhan@bnu.edu.cn}). Xuzhou Zhan  was supported by the Scientific Research Fund from the National Natural Science Foundation of China (12401489), the Beijing Natural Science Foundation (1244044) and Beijing Normal University at Zhuhai (111032119).}}

\begin{document}

\maketitle

\begin{abstract}
Assessing the asymptotic stability of linear self-adjoint homogeneous systems of differential-algebraic equations requires testing the Hurwitz stability of the associated Hermitian matrix polynomial $P(\lambda)$. Tests for known necessary and sufficient conditions rely on linearizations and eigensolvers, solving matrix equations and testing matrix inequalities, or generalized B\'ezoutians, and scale with either $O(d^2 n^3)$ or $O(d^3n^3)$ complexity, where $d$ and $n$ are the degree and size of $P(\lambda)$, respectively. We establish several novel sufficient conditions for stability, based on the numerical range of $P(\lambda)$. Based on the new results, we propose algorithms with $O(d n^3)$ asymptotic complexity. Our methods rely on very efficient core numerical linear algebra routines, such as the Cholesky decomposition of $n \times n$ matrices or the computation of the largest eigenvalue of $n \times n$ definite pencils. Therefore, a significant computational advantage  can be expected in favor of the proposed approach even for moderate values of $d$ or $n$, and we verify this with numerical experiments.
\end{abstract}

\begin{keywords}
Stability, Hurwitz stability, Matrix polynomial, Hermitian matrix, Differential-algebraic equation,  Numerical range
\end{keywords}

\begin{AMS}
15A18, 15A22, 34D20
\end{AMS}

\section{Introduction}\label{sec:intro}

Consider the linear self-adjoint homogeneous system of differential-algebraic equations
\begin{equation}\label{System}
A_{d} y^{(d)}(t)+A_{d-1} y^{(d-1)}(t)+\cdots+A_{0} y(t)=0,\quad A_k=A_k^*\in \mathbb C^{n\times n}, \quad A_d \neq 0,
\end{equation}
where $y(t)$ denotes the output vector, assumed to be a complex vector-valued function of the real variable $t$ and $d$ times continuously differentiable, and $y^{(k)}(t)$ denotes its $k$-th derivative with respect to $t$. These systems, including those where the leading coefficient $A_d$ may be not invertible, naturally arise in engineering and physics, and inherently encode their algebraic constraints \cite{KM}. Among their applications are vibrating systems, electric networks, and acoustics \cite{GLRMP, KM, TM}.
In this context, one says that \eqref{System} is \emph{asymptotically stable} if every solution $y(t)$ (that is, independently of the initial conditions at $t=0$, as long as they are consistent) satisfies $\displaystyle \lim_{t\rightarrow +\infty} y(t)=0$, and that \eqref{System} is \emph{stable} if every solution $y(t)$ is uniformly bounded for all $t \geq 0$. 

Both of these properties can be analyzed via the characteristic matrix polynomial of \eqref{System}, that is, the Hermitian matrix polynomial
\begin{equation}\label{Poly}
P(\lambda)=A_{d} \lambda^{d}+A_{d-1} \lambda^{d-1}+\cdots+A_{0},\quad  A_k=A_k^*\in \mathbb C^{n\times n}, \quad A_d \neq 0.
\end{equation}
In particular, the (asymptotic) stability of \eqref{System} can be studied by locating the finite eigenvalues of \eqref{Poly} (see Section \ref{sec:prelim} for the definition of finite eigenvalues of a matrix polynomial, as well as for other definitions and notations used in this introduction). In particular, the matrix polynomial \eqref{Poly} is called \emph{Hurwitz stable} if it is regular, i.e., $\det P(\lambda) \not\equiv 0$, and its finite eigenvalues all lie in the open left half complex plane; it is called \emph{quasi-stable} if it is regular and its finite eigenvalues all lie in the closed left half plane; and it is called \emph{stable} if it is quasi-stable and all pure imaginary eigenvalues are semisimple. One can prove (Theorem \ref{bigtheorem}) that \eqref{System} is asymptotically stable (resp. stable) if and only if \eqref{Poly} is Hurwitz stable (resp. stable).

This raises the question of how to test the stability of \eqref{Poly}. Reliable algorithms exist for the computation of the eigenvalues of polynomial matrices, most usually via a linearization, i.e., a pencil (linear polynomial) $L(\lambda)=L_1 \lambda + L_0$ that captures all the eigenstructure of $P(\lambda)$ in \eqref{Poly}. Many methods to construct a linearization can be found in the literature \cite{DLPV,MMMM,NNT,NP}; then, one can use an eigensolver for pencils, such as the QZ algorithm \cite{GV,MS} (for regular matrix polynomials) or the staircase algorithm  \cite{Van} (for the case where the user also needs to assess whether the input $P(\lambda)$ is singular, i.e., $\det P(\lambda) \equiv 0$). Alternatively, stability can be tested by the Lyapunov criterion \cite{Stykel}. Namely, if $A_d$ is invertible then \eqref{Poly} is Hurwitz stable if and only if a Hermitian matrix $Y$ exists satisfying $L_1^* Y L_1 \succeq 0$ and $L_0^* Y L_1 +L_1^* Y L_0 \succ 0$;  this criterion can be generalized also to the case of a non-invertible leading coefficient by using projection matrices \cite{Stykel}. A third characterization arises from the theory of generalized B\'{e}zoutians developed by Lerer and Tismenetsky \cite{LT82,NNT}, that is, \eqref{Poly} is Hurwitz stable if and only if a certain Lerer-Tismenetsky B\'{e}zout matrix is positive definite. While all these methods provide necessary and sufficient conditions, and hence are as tight as possible, they all rely on cubic-cost algorithms to be performed on matrices of size $O(dn)$, yielding an $O(d^3n^3)$ overall cost. It is worth remarking that, at least for regular matrix polynomials, the complexity of computing the eigenvalues can be reduced to $O(d^2 n^3)$ via structured versions of QZ \cite{AMRVW}. This results in the paradoxical conclusion that, with the technology available at the time when this article is written, testing exactly whether a regular $P(\lambda)$ is Hurwitz stable by either the Lyapunov or the Lerer-Tismenetsky method is computationally more expensive than directly computing the eigenvalues and verifying whether they lie in the open left half plane or not. To partially mitigate this claim, we note in passing that a possible disadvantage of the na\"{\i}ve approach to just solve the polynomial eigenvalue problem is that, infamously \cite{AMRVW,TM,Van}, this can be numerically way more delicate than attempting Cholesky decompositions to test definiteness of Hermitian matrices. Moreover, the na\"{\i}ve test can be inconclusive if one is unlucky enough to find an ill-conditioned eigenvalue near the imaginary axis.

The paragraph above motivates a quest for sufficient conditions for stability that, albeit not characterizing the set of stable polynomials, are quicker to test numerically. Scattered sufficient conditions for stability exist in the literature \cite{MMW, RB, TM}, but they are often overly restrictive, limiting their applicability. In this paper, we systematically improve on these results by establishing novel sufficient conditions that are either weaker or more broadly applicable (or both) than those previously available. Furthermore, when possible, we  analyze the implication chains between both the new and the old criteria.  We will bear particularly in mind the practical applicability of these tests, aiming for more efficient tests than the (mathematically superior, as they provide ``if and only if'' characterizations rather than just ``if'' sufficient conditions) exact methods mentioned above. In particular,  our new results consist of a linear (with respect to the degree $d$) number of cubic-cost algorithms, each to be performed on  matrices or pencils having size $O(n)$, for an overall $O(dn^3)$ complexity. In some applications, $n$ may be very large \cite{TM}, and therefore even an apparently moderate speedup of a factor $d$ or $d^2$ can in practice translate to a significant saving of computational time.  On the other hand, it is rare to find a practical problem whose degree  is higher than four, and therefore we primarily focus on the range $2 \leq d \leq 4$ and  simply sketch how some of our results can be generalized to higher values of $d$. This focus does not limit the computational value of our work, because, in addition to the lower asymptotic growth in $d$, we also expect the constants to be very low for our method. The reason is that we only rely on extremely efficient core tasks, such as Cholesky decompositions and extractions of the largest eigenvalue of definite pencils; see Section \ref{sec:comasp} for more details.

Our approach is based on the notion of numerical range of a matrix polynomial. Recall from \cite{LR94}  that, for $P(\lambda)$ as in \eqref{Poly}, its numerical range  is defined as
$$
W(P)=\{\mu\in \mathbb C: x^*P(\mu)x=0 \mbox{ for }x\neq 0\in \mathbb C^n\}.
$$
When $P(\lambda)=I \lambda-A$, $W(P)$ reduces to the classical numerical range of a square matrix $A$. If $\mu \in \mathbb{C}$ is a finite eigenvalue of \eqref{Poly}, then there exists a nonzero vector $x$ such that $P(\mu)x=0$. Hence, the numerical range of \eqref{Poly} is a superset of its finite spectrum. Therefore, defining $p_x(\lambda):=x^*P(\lambda)x$, the condition
\begin{equation}\label{px}
\mbox{\rm For all } x\neq 0, \mbox{ \rm the real polynomial }p_x(\lambda)\mbox{ \rm is not identically 0 and is Hurwitz stable}
\end{equation}
is sufficient for the Hurwitz stability of \eqref{Poly}. A necessary condition for \eqref{px} to hold is that the matrices $A_k$ are either all positive semidefinite or all negative semidefinite, for $k=0,\ldots, d$; see, e.g., \cite[Statement A]{KaV} or \cite{P}. Note that these two requirements are essentially equivalent, up to multiplying \eqref{Poly} by $-1$. In this paper, we aim to find sufficient conditions for the stability of \eqref{System} that are based on \eqref{px}. For this reason, with no loss of generality, we are going to systematically assume $A_k \succeq 0$. We note in passing that, while this definiteness requirement is necessary for \eqref{px} to hold, it is not necessary for \eqref{System} to be stable: A simple counterexample is 
\[   P(\lambda) = \begin{bmatrix}
    1 & 0\\
    0 & -1
\end{bmatrix} \lambda + \begin{bmatrix}
    1 & 0 \\
    0 & -1
\end{bmatrix} .  \]

The structure of the paper is as follows. In Section \ref{sec:prelim}, we recall some basic notions and develop elementary results, including a complete analysis for $d=1$. In Section \ref{sec:d3}, we derive sufficient conditions for stability of Hermitian matrix polynomials of degree $2 \leq d \leq 3$. We then extend some of these results to higher degrees in Section \ref{sec:d4}. In Section \ref{sec:comasp}, we analyze how our theory results in efficient algorithms. Numerical experiments are presented in Section \ref{sec:numexp}, and conclusions are drawn in Section \ref{sec:conc}.

\section{Preliminaries}\label{sec:prelim}

We denote by $\Cminus \subsetneq \mathbb{C}$ and ${\rm i}\mathbb R \subsetneq \mathbb{C}$ the open left half-plane and the imaginary axis, respectively. If $A=A^*,B=B^*\in \mathbb C^{n \times n}$ are  Hermitian matrices, we write $A \succ B$ if $A-B$ is positive definite, and $A\succeq B$ if $A-B$ is positive semidefinite. 

Let $P(\lambda)$ be a square matrix polynomial of degree $d$ as in \eqref{Poly}. We say that it is \emph{regular} if $\det P(\lambda)$ is not identically zero, and \emph{singular} otherwise. We also say that it is \emph{Hermitian} if all its coefficients satisfy $A_k=A_k^*$, and that it is \emph{positive} (resp. \emph{nonnegative}) if it is Hermitian and $A_k \succ 0$ (resp. $A_k \succeq 0$) for all $k$.
 Recall \cite{DLPV,DN,GLRMP,NNT,NP,TM} that a complex
number $\mu \in \mathbb{C}$ is a \emph{finite eigenvalue} 
of $P(\lambda)$ if
\begin{equation}\label{def:eig}
{\rm rank} P(\mu)<{\rm sup}_{\lambda\in \mathbb C}{\rm rank} P(\lambda).
\end{equation}
If $P(\lambda)$ is regular, then the algebraic multiplicity of the finite eigenvalue $\mu$ is the multiplicity of $\mu$ as a zero of $\det P(\lambda)$, while its geometric multiplicity is $\dim \ker P(\mu)$. A matrix polynomial $P(\lambda)$ can also have infinite eigenvalues, and this happens precisely when the \emph{reversal} $\lambda^d P\left(\frac{1}{\lambda}\right)$ has a zero eigenvalue; algebraic and geometric multiplicities of $\infty$ are analogously defined as the corresponding quantities for the zero eigenvalue of the reversal. An eigenvalue of $P(\lambda)$ is said to be semisimple if its algebraic multiplicity is equal to its geometric multiplicity. In this paper, we will denote by $\Lambda(P)$ the finite spectrum of $P(\lambda)$, that is, the set of all its finite eigenvalues.

Let us argue in Lemma \ref{lem:nosing} that, if the system \eqref{System} is associated  to a singular matrix polynomial \eqref{Poly}, then it cannot be stable and hence, a fortiori, it cannot be asymptotically stable either.

\begin{lemma}\label{lem:nosing}
    If the system \eqref{System} is (asymptotically) stable, then the associated matrix polynomial \eqref{Poly} is regular.
\end{lemma}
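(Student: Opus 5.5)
We argue by contraposition: assuming $P(\lambda)$ in \eqref{Poly} is singular, we construct solutions of \eqref{System} that are not uniformly bounded on $t\ge 0$. Then \eqref{System} is not stable, and hence not asymptotically stable either, since asymptotic stability of a linear system already forces every solution to be bounded on $[0,\infty)$. The construction rests on a classical fact: a singular square matrix polynomial has a nonzero right null vector that is itself a polynomial. Concretely, $\det P(\lambda)\equiv 0$ means $P(\lambda)$ has rank $r<n$ over the field $\mathbb C(\lambda)$. Hence there is a nonzero $v(\lambda)\in\mathbb C(\lambda)^n$ with $P(\lambda)v(\lambda)=0$. Clearing denominators, we may take $v(\lambda)=\sum_{j=0}^m v_j\lambda^j\in\mathbb C[\lambda]^n$, $v\not\equiv 0$, with $P(\lambda)v(\lambda)\equiv 0$ identically in $\lambda$.

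The key step is to turn this polynomial kernel vector into exponential solutions with arbitrary exponent. For any fixed $\mu\in\mathbb C$, set $y_\mu(t)=v(\mu)e^{\mu t}$. Then $y_\mu^{(k)}(t)=\mu^k v(\mu)e^{\mu t}$, so
\[
\sum_k A_k y_\mu^{(k)}(t)=P(\mu)v(\mu)e^{\mu t}=0 .
\]
Thus $y_\mu$ solves \eqref{System} for every $\mu$. Since $v\not\equiv 0$, the polynomial vector $v(\lambda)$ vanishes at only finitely many points. We can therefore choose a real $\mu>0$ with $v(\mu)\neq 0$. For this choice, $\|y_\mu(t)\|=\|v(\mu)\|e^{\mu t}\to\infty$, so $y_\mu$ is unbounded and \eqref{System} is not stable. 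Consistency of the initial conditions is automatic, because $y_\mu$ is a genuine smooth solution on the whole real line and its initial data are, by definition, consistent.

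A variant avoids growing solutions altogether, in case one prefers an argument via non-uniqueness. Take $\mu=0$ if $v(0)\ne0$ (or a suitable real $\mu$ otherwise), so that $y_\mu$ and the zero solution share no common structure. Alternatively, use linear combinations $\sum_i c_i v(\mu_i)e^{\mu_i t}$ with $\mu_i>0$. Either way, the single-exponential construction already suffices.

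The main obstacle is the first step: justifying that singularity yields a \emph{polynomial} (not merely pointwise) null vector. I would handle it as follows. Pick an $r\times r$ nonsingular minor of $P(\lambda)$ over $\mathbb C(\lambda)$, where $r$ is the normal rank. Applying Cramer's rule to an $(r+1)\times(r+1)$ submatrix then gives an explicit kernel vector whose entries are signed $r\times r$ minors, hence polynomials, and at least one of them is the chosen nonzero minor. Everything else is a routine verification.
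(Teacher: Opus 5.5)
Your proof is correct and is essentially the paper's argument: a polynomial kernel vector of the singular $P(\lambda)$, evaluated at a real $\mu>0$, gives the unbounded solution $y(t)=e^{\mu t}v(\mu)$. The only difference is that the paper invokes a kernel vector that is nonzero at every $\lambda\in\mathbb{C}$ (from minimal-basis theory) and takes $\mu=1$, whereas you clear denominators and pick $\mu>0$ away from the finitely many zeros of $v$, which is slightly more self-contained; your ``variant'' paragraph is unnecessary and can be dropped, since its $\mu=0$ option gives only a bounded constant solution and so does not refute stability.
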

\begin{proof}
    Suppose for a contradiction that $P(\lambda)$ is singular. Then, there exists $x(\lambda) \in \mathbb{C}[\lambda]^n$ such that (i) $P(\lambda) x(\lambda) \equiv 0$ (ii) $x(\lambda) \neq 0$ for every $\lambda \in \mathbb{C}$ \cite[Section 2]{DN}. Now consider the vector differential operator $D:=x\left( \frac{d}{dt}\right)$. Then, for every smooth scalar function $f(t)$, consider $y(t):=D f(t)$. Then, $P(\frac{d}{dt}) y(t) = 0$, that is, $y(t)$ is always a solution of \eqref{System} regardless of the choice of $f(t)$. This means, choosing for example $f(t)=e^t$, that the vector valued function $y(t)=D e^t = e^{t} x(1) \neq 0$ solves \eqref{System}, and hence the system is neither asymptotically stable nor stable because
\[ \lim_{t \to +\infty}  \|y(t) \| = \lim_{t \to +\infty} e^t \|x(1)\| = +\infty.   \]
\end{proof}
Lemma \ref{lem:nosing} justifies assuming that \eqref{Poly} is regular. However, in this paper we will need to consider other matrix polynomials constructed from just certain coefficients of \eqref{Poly}, which may be singular even when \eqref{Poly} is regular. Therefore, it is useful to note that the definition of finite eigenvalue given in \eqref{def:eig} is also valid for singular matrix polynomials. It is also possible to extend the definitions of algebraic and geometric multiplicities to the singular case, via the Smith form of $P(\lambda)$ \cite{DN}, and to still link them to, respectively, the multiplicities of roots of certain polynomials \cite[Theorem 2.3]{DN} and the dimension of certain (quotient) vector subspaces \cite[Section 2.3]{DN}. The details of these more general definitions are not strictly needed in the present manuscript, but an interested reader can consult \cite{DN} or the references therein. 

We now restrict our attention to linear matrix polynomials, often called \emph{pencils}. Two Hermitian pencils $A\lambda-B$ and $C\lambda-D$ are called \emph{congruent} if 
there exists a nonsingular square matrix $X \in GL(n,\mathbb{C})$ such that $$X^*(A\lambda-B)X=C\lambda-D.$$ 

\begin{proposition}\label{ProPencil}
Let $A\lambda-B$ be an $n \times n$ Hermitian pencil such that $A\succeq 0$ and $B\succeq 0$. Then $A\lambda-B$ is congruent to a Hermitian pencil of the form
\begin{equation}\label{StrictEquvalence}
\bigoplus_{i=1}^s (\lambda-\mu_i)\oplus - I_t\oplus 0_{n-s-t},\quad  s,t\geq 0,\ s+t\leq p,
\end{equation}
where $\mu_i\geq 0$ for $i=1,\ldots,s$. 
\end{proposition}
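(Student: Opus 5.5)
We prove the statement by an explicit congruence built in three steps: split off the common kernel of $A$ and $B$, then the part where only $B$ acts, then diagonalize what remains. Throughout, the index $p$ in \eqref{StrictEquvalence} is read as the bound $s+t\leq n$, which holds automatically from the block sizes.

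\emph{Step 1: split off the common kernel.} Since $A\succeq 0$ and $B\succeq 0$, we have $A+B\succeq 0$. The kernel of $A+B$ equals $\ker A\cap\ker B$: if $x^*(A+B)x=0$, then $x^*Ax=x^*Bx=0$, and semidefiniteness gives $Ax=Bx=0$. Choose a unitary $U$ whose last $n-r$ columns span this common kernel, where $r=\operatorname{rank}(A+B)$. Then
\[ U^*(A\lambda-B)U=(A_1\lambda-B_1)\oplus 0_{n-r}, \qquad A_1,B_1\succeq 0,\quad A_1+B_1\succ 0. \]

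\emph{Step 2: isolate $\ker A_1$.} Let $V$ be unitary with its first columns spanning $(\ker A_1)^\perp$ and its last $t$ columns spanning $\ker A_1$. On $\ker A_1$ the matrix $B_1$ is positive definite, because $A_1+B_1\succ 0$ there. We now eliminate the off-diagonal blocks of $B_1$ by a block-triangular congruence. Write
\[ V^*A_1V=\begin{bmatrix}\hat A&0\\0&0\end{bmatrix},\qquad V^*B_1V=\begin{bmatrix}B_{11}&B_{12}\\B_{21}&B_{22}\end{bmatrix}, \]
with $\hat A\succ 0$ and $B_{22}\succ 0$. Congruence by $\begin{bmatrix}I&0\\-B_{22}^{-1}B_{21}&I\end{bmatrix}$ leaves the $A$-part unchanged, because its first block row and column are untouched and the $(2,2)$ block of $A$ is zero. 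It turns the $B$-part into $S\oplus B_{22}$, where
\[ S=B_{11}-B_{12}B_{22}^{-1}B_{21}. \]
The Schur complement $S$ is positive semidefinite, since it is a Schur complement of the positive semidefinite matrix $V^*B_1V$ with respect to its positive definite block $B_{22}$.

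\emph{Step 3: normalize the blocks.} A further congruence by $B_{22}^{-1/2}$ on the second block gives $-I_t$. On the first block, congruence by $\hat A^{-1/2}W$, with $W$ unitary diagonalizing $\hat A^{-1/2}S\hat A^{-1/2}\succeq 0$, gives $\bigoplus_{i=1}^s(\lambda-\mu_i)$ with $\mu_i\geq 0$. The main point needing care is Step 2: verifying that this congruence keeps the $A$-part intact and that the Schur complement stays positive semidefinite, which is what forces $\mu_i\ge 0$. Combining the three steps yields the block form \eqref{StrictEquvalence}.
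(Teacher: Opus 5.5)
Your proof is correct, and it takes a genuinely different route from the paper.

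The paper invokes the Lancaster--Rodman canonical form of Hermitian pencils under congruence. Deferring to the proof of their Theorem 10.1, it asserts that $A\succeq 0$ leaves only three kinds of blocks: blocks $\lambda-\mu_i$ with $\mu_i$ real, infinite-eigenvalue blocks $\delta_k D_{r_k}(\lambda)$, and a zero block. It then uses $B\succeq 0$, via the inertia of the sip matrix $D_{r_k}(0)$, to force $r_k=1$, $\delta_k=-1$ and $\mu_i\geq 0$.

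You instead construct the congruence by hand, using the spectral theorem, the identity $\ker(A+B)=\ker A\cap\ker B$, and a Schur complement. Your block-triangular congruence does fix $\hat A\oplus 0$ and send $V^*B_1V$ to $S\oplus B_{22}$. The degenerate cases with empty blocks also go through unchanged.

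The canonical-form argument is shorter if that theory is taken for granted, and the uniqueness of the form certifies $s$, $t$ and the $\mu_i$ as congruence invariants. However, the step in which $A\succeq 0$ rules out non-real eigenvalues, larger Jordan blocks at real eigenvalues and singular Kronecker blocks is left implicit there.

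Your argument is self-contained and constructive, and it reads off the data explicitly: $s=\operatorname{rank}A$, $t=\operatorname{rank}(A+B)-\operatorname{rank}A$, $n-s-t=\dim(\ker A\cap\ker B)$, and the $\mu_i$ are the eigenvalues of $\hat A^{-1/2}S\hat A^{-1/2}$. As a by-product it yields the first part of Proposition~\ref{Pro2} at once, since $\ker A\subseteq\ker B$ holds exactly when $\ker(A+B)=\ker A$, i.e., when $t=0$.

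Your reading of the undefined bound $s+t\leq p$ as $s+t\leq n$ is the right one.
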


\begin{proof}
Following the proof of \cite[Theorem 10.1]{LR05}, the positive semidefiniteness of $A$ yields that $A\lambda-B$ is congruent to
\begin{equation}\label{StrictEquvalenceI}
\bigoplus_{i=1}^s (\lambda-\mu_i) \oplus \bigoplus_{k=1}^t \delta_k D_{r_k}(\lambda) \oplus 0_{n-s-t},
\end{equation}
where $\delta_k \in \{+1,-1\}$ and 
$$
D_{r_k}(\lambda):=\begin{bmatrix}
  0  & \cdots & \lambda & 1 \\
   \vdots & \iddots & 1 & 0 \\
   \lambda & \iddots & \iddots & \vdots \\
    1& 0 & \cdots & 0
\end{bmatrix}\in \mathbb C[\lambda]^{r_k\times r_k}.
$$
It is easy to prove that the eigenvalues of $D_{r_k}(0)$, counted with multiplicity, are $1$ (repeated $\lceil \frac{r_k}{2} \rceil$ times), and possibly $-1$ (repeated $\lfloor \frac{r_k}{2} \rfloor$ times). If either $r_k>1$ or $\delta_k=1$ (or both), this contradicts the positive semidefiniteness of $B$. Similarly, it must be $\mu_i \geq 0$ for all $i=1,\dots,s$, for otherwise the negative index of inertia of $B$ would be nonzero. Hence, \eqref{StrictEquvalenceI} reduces to \eqref{StrictEquvalence}.
\end{proof}

It is immediate by the definitions that congruent pencils have the same finite eigenvalues. Therefore, a consequence of Proposition \ref{ProPencil} is that any finite eigenvalue of a Hermitian pencil $A\lambda-B$ with $A \succeq 0$ and $B \succeq 0$ must be a nonnegative real number, and coincide with of one of the numbers $\mu_i$ in the statement of Proposition \ref{ProPencil}. Note also that Proposition \ref{ProPencil} implies that such a pencil has at least one finite eigenvalue if and only if its leading coefficient $A \neq 0$. In view of these observations, for a Hermitian pencil $A\lambda-B$ with $0 \neq A \succeq 0$ and $B \succeq 0$,  we henceforth 
denote by $\lambda_{\max}(A,B)$ the largest finite eigenvalue of $A\lambda-B$. Generally, $\lambda_{\max}(A,B) \geq 0$, and it may be zero (for example if $A=1, B=0 \in \mathbb{C}^{1 \times 1}$). We record in Proposition \ref{Pro2} a condition under which the quantity $\lambda_{\max}(A,B)$ is guaranteed to be strictly positive.

\begin{proposition}\label{Pro2}
Under the assumptions of Proposition \ref{ProPencil}, suppose that $\ker A\subseteq \ker B
$. Then $A\lambda-B$ is congruent to the Hermitian pencil
$$
\bigoplus_{i=1}^s (\lambda-\mu_i)\oplus 0_{n-s}.
$$
If, in addition, $B \neq 0$ then, for every $0 \neq x\in \mathbb C^n$ such that
$
x^*Bx>0
$, it holds $
x^*Ax>0
$ and
$$
0<\frac{x^*Bx}{x^*Ax}\leq \lambda_{\max}(A,B).
$$
\end{proposition}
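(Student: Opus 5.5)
We start from the congruent form supplied by Proposition \ref{ProPencil}: there is a nonsingular $X$ with
\[ X^*(A\lambda-B)X=\bigoplus_{i=1}^s (\lambda-\mu_i)\oplus - I_t\oplus 0_{n-s-t}, \]
so that $X^*AX=I_s\oplus 0_t\oplus 0_{n-s-t}$ and $X^*BX=\diag(\mu_1,\dots,\mu_s)\oplus I_t\oplus 0_{n-s-t}$. The goal of the first part is to show that $t=0$ under the hypothesis $\ker A\subseteq\ker B$. Take $e$ to be the standard basis vector of the first index in the $-I_t$ block and set $v:=Xe$. Then $v^*Av=e^*X^*AXe=0$. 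Because $A\succeq 0$, this forces $Av=0$, so $v\in\ker A\subseteq\ker B$. On the other hand, $v^*Bv=1\neq 0$, which is a contradiction. Therefore $t=0$, and the claimed congruent form follows.

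For the second part, write $x=Xy$ with $y\in\mathbb C^n$; this is possible since $X$ is invertible. Using $t=0$, we obtain
\[ x^*Ax=\sum_{i=1}^s |y_i|^2,\qquad x^*Bx=\sum_{i=1}^s \mu_i|y_i|^2. \]
If $x^*Bx>0$, then some $y_i$ with $i\le s$ is nonzero, and hence $x^*Ax>0$. (Alternatively, one can argue directly: if $x^*Ax=0$ then $x\in\ker A\subseteq\ker B$, contradicting $x^*Bx>0$.) The quotient is then a convex combination of the $\mu_i$ with weights $|y_i|^2/\sum_j|y_j|^2$. It is therefore bounded above by $\max_i\mu_i$, and it is positive by assumption. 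Note that $B\ne0$ guarantees both that such $x$ exist and that $s\ge1$ with some $\mu_i>0$.

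It remains to identify $\max_i\mu_i$ with $\lambda_{\max}(A,B)$. Since congruence preserves finite eigenvalues, we need the finite eigenvalues of $\bigoplus_{i=1}^s(\lambda-\mu_i)\oplus 0_{n-s}$. The generic rank of this pencil is $s$, and the rank drops exactly at $\lambda=\mu_i$. So by \eqref{def:eig} the finite eigenvalues are precisely the $\mu_i$, and $\lambda_{\max}(A,B)=\max_i\mu_i$.

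The only delicate step is excluding the $-I_t$ block. This rests on the fact that, for $A\succeq0$, the condition $v^*Av=0$ implies $Av=0$; that fact follows from the square root $A^{1/2}$, since $\|A^{1/2}v\|^2=0$. Everything else is direct computation in the canonical form.
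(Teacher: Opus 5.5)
Your proof is correct and essentially matches the paper's: you exclude the $-I_t$ block by exhibiting $Xe_{s+1}\in\ker A\setminus\ker B$, then bound the quotient in the coordinates $y=X^{-1}x$ by $\max_i\mu_i=\lambda_{\max}(A,B)$. Your detour through $A^{1/2}$ is unnecessary: $X^*AXe_{s+1}=0$ with $X^*$ invertible already gives $AXe_{s+1}=0$, which is how the paper concludes.
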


\begin{proof}
In view of Proposition \ref{ProPencil}, there exists a nonsingular matrix $X \in GL(n, \mathbb{C})$ such that $X^*(A\lambda-B)X$ has the form \eqref{StrictEquvalence}. If $t>0$, then $A X e_{s+1}=0 \neq B X e_{s+1}$, contradicting the assumption $\ker A\subseteq \ker B$. Hence, $t=0$ and $A \lambda-B$ is congruent to
\begin{equation*}
    \bigoplus_{i=1}^s (\lambda-\mu_i)\oplus 0_{n-s}.
\end{equation*}

Note now that the existence of a vector $x$ as in the last part of the statement is guaranteed by $B \neq 0$. Define $y:=X^{-1}x$. With no loss of generality, we may assume that the $\mu_i$ are ordered nonincreasingly, i.e., $\mu_1\geq \cdots \geq \mu_s$. Then,
$$
\mu_1 x^* A x - x^* B x = x^*(\mu_1A-B)x=y^*\left(\bigoplus_{i=1}^s (\mu_1-\mu_i)\oplus 0_{n-s}\right)y\geq 0.
$$
Recalling that $x^* B x > 0$, we get both $x^* A x > 0$ and $\displaystyle\lambda_{\max}(A,B) = \mu_1 \geq \frac{x^* B x}{x^* A x}  > 0$.
\end{proof}

In the next sections, we will seek sufficient conditions for a Hermitian matrix polynomial to be Hurwitz stable (or quasi-stable). These conditions will include the assumption of positive semidefinite coefficients, i.e., that the matrix polynomial is nonnegative, for the reasons pointed out in Section \ref{sec:intro}. Before turning to this  task, we state and prove the general Lemma \ref{lem:defreg}, that guarantees regularity under a very simple assumption. Since this assumption is always satisfied by our sufficient conditions for stability, this will allow us to simplify the exposition of our proofs, avoiding to check regularity every single time and just focusing on proving that the eigenvalues all lie in the appropriate region of the complex plane.

\begin{lemma}\label{lem:defreg}
    Let $P(\lambda)$ be given as in \eqref{Poly} with $A_k \succeq 0$ for $k=0,\dots,d$. If $\sum_{k=0}^d A_k \succ 0$, then $P(\lambda)$ is regular. The same conclusion holds if $\sum_{k \in S} A_k \succ 0$, where $S \subseteq \{0,\dots,d\}$ is not empty. In particular, if there exists at least one integer $j \in \{0, \dots, d\}$ such that $A_j \succ 0$, then $P(\lambda)$ is regular.
\end{lemma}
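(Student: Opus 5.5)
The plan is to prove the middle statement (with a general nonempty $S$). The first statement is then the case $S=\{0,\dots,d\}$, and the last one is the case $S=\{j\}$. Suppose, for a contradiction, that $P(\lambda)$ is singular. Then there is a nonzero vector polynomial $x(\lambda)\in\mathbb C[\lambda]^n$ with $P(\lambda)x(\lambda)\equiv 0$, exactly as in the proof of Lemma \ref{lem:nosing}. Equivalently, for every $\lambda\in\mathbb C$ we have $\det P(\lambda)=0$, so $\ker P(\lambda)\neq\{0\}$ for all $\lambda$. It is convenient to evaluate only at real points. For each real $t$, pick a nonzero $v_t\in\ker P(t)$, so that
\[ 0 = v_t^* P(t) v_t = \sum_{k=0}^d t^k\, v_t^* A_k v_t . \]

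The key step is to pick $t$ so that every term is nonnegative and the terms with indices in $S$ are strictly positive. Take $t>0$ real. Then $t^k>0$ for all $k$ and $v_t^*A_kv_t\ge 0$ because $A_k\succeq 0$. Hence each summand is nonnegative, and a zero sum forces $v_t^*A_kv_t=0$ for every $k$. Since $A_k\succeq 0$, this gives $A_kv_t=0$ for every $k$; we only need it for $k\in S$. Summing over $k\in S$ yields $\left(\sum_{k\in S}A_k\right)v_t=0$. This contradicts $\sum_{k\in S}A_k\succ 0$ and $v_t\neq 0$. One real point, e.g.\ $t=1$, already suffices: it shows $\det P(1)\neq 0$, so $\det P(\lambda)\not\equiv 0$.

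In fact the argument is more direct than a contradiction. It shows $P(1)=\sum_k A_k \succeq \sum_{k\in S}A_k\succ 0$, because adding positive semidefinite matrices preserves this inequality. Thus $P(1)$ is positive definite, hence invertible, and $P$ is regular. I will present this version. Both the general case and the case of a single $A_j\succ 0$ follow from the monotonicity $\sum_{k=0}^dA_k\succeq\sum_{k\in S}A_k$. I do not expect any real obstacle. The only point needing care is the use of the definition of regularity: exhibiting one $\lambda$ with $\det P(\lambda)\neq0$ is enough.
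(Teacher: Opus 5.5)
Your proof is correct, and it is a streamlined version of the paper's argument. Both rest on the same observation: at a positive real point every term $\mu^k A_k$ is positive semidefinite, so the hypothesis forces positivity.

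The difference is in how that observation is used. The paper argues by contradiction and invokes the fact from \cite{DN} that a singular $P(\lambda)$ has a polynomial null vector $y(\lambda)$ that is nonzero at every $\lambda$. It then evaluates at some $\mu>0$ and uses the bound $x^*P(\mu)x \geq \min\{1,\mu^d\}\sum_k x^*A_kx > 0$ with $x=y(\mu)$.

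Your direct version is $P(1)=\sum_{k=0}^d A_k \succeq \sum_{k\in S}A_k \succ 0$. It needs neither the structural result on singular polynomials nor a contradiction, since one point where $\det P$ does not vanish already certifies regularity. The paper's estimate, applied to arbitrary $x\neq 0$, would give the slightly stronger conclusion $P(\mu)\succ 0$ for every $\mu>0$. That extra information is not needed for the lemma.

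Your first paragraph, with the kernel vectors $v_t$, is also valid. It becomes redundant once you present the direct argument, so you can drop it.
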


\begin{proof}
    Assume that $P(\lambda)$ is singular. Then there exists a polynomial vector $y(\lambda) \in \mathbb{C}[\lambda]^n$ such that (i) $P(\lambda) y(\lambda) \equiv 0$ (ii) $y(\lambda) \neq 0$ for every $\lambda \in \mathbb{C}$ \cite[Section 2]{DN}. Now let $\mu > 0$ be a positive real number, consider the nonzero vector $x:=y(\mu) \in \mathbb{C}^n$, and define $a_k:=x^* A_k x$ for all $k=0,\dots,d$. We readily get the contradiction
\[ 0 = x^* P(\mu) x = p_{x}(\mu) \geq \min\{1,\mu^d\} \cdot \left(\sum_{k=0}^d a_k \right) > 0.     \]
Finally, if $\sum_{k \in S} A_k \succ 0$ and $x \neq 0$, then
\[ x^* \left( \sum_{k=0}^d A_k \right) x \geq x^* \left( \sum_{k \in S} A_k \right) x > 0.   \]
\end{proof}

In view of Proposition \ref{ProPencil} and Lemma \ref{lem:nosing}, it is immediate to analyze the case of a nonnegative pencil. Proposition \ref{ProPencil} implies that its eigenvalues, if any, must be nonpositive real numbers. We record the more general stability analysis in Corollary \ref{CorPencil} below, and in the next sections we will focus on Hermitian polynomials of higher degree $d>1$.

\begin{corollary}\label{CorPencil}
 Let $P(\lambda)$ be as in \eqref{Poly} with $d=1$, $A_1 \succeq 0, A_0 \succeq 0$. Then, $P(\lambda)$ is stable if and only if it is regular, and it is Hurwitz stable if and only if $A_0 \succ 0$.
\end{corollary}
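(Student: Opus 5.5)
We apply Proposition \ref{ProPencil} to the pencil $A_1\lambda - (-A_0)$; equivalently, we rewrite $P(\lambda)=A_1\lambda + A_0 = A_1\lambda - B$ with $B:=-A_0 \preceq 0$. It is cleaner to work with $-P(-\lambda) = A_1\lambda - A_0$, which has $A_1\succeq 0$ and $A_0 \succeq 0$. By Proposition \ref{ProPencil}, there is a nonsingular $X$ with
\[ X^*(A_1\lambda - A_0)X = \bigoplus_{i=1}^s (\lambda-\mu_i)\oplus (-I_t)\oplus 0_{n-s-t}, \qquad \mu_i\ge 0. \]
Replacing $\lambda$ by $-\lambda$ and multiplying by $-1$ gives
\[ X^* P(\lambda) X = \bigoplus_{i=1}^s (\lambda+\mu_i)\oplus I_t \oplus 0_{n-s-t}. \]
Congruence preserves regularity, finite eigenvalues and their multiplicities. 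Hence $P$ is regular if and only if $s+t=n$. In that case its finite eigenvalues are exactly $-\mu_1,\dots,-\mu_s$, all real and nonpositive.

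For stability: if $P$ is regular, the congruent form is a diagonal matrix polynomial whose entries are either linear or constant. Each eigenvalue $-\mu_i$ therefore has algebraic multiplicity equal to the number of indices $j$ with $\mu_j=\mu_i$. Since the form is diagonal, the geometric multiplicity $\dim\ker$ is the same number. So every eigenvalue is semisimple, and in particular the only possible imaginary-axis eigenvalue, $0$, is semisimple. Thus $P$ is quasi-stable and stable. The converse holds by definition, since stability includes regularity.

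For Hurwitz stability, we need regularity together with $\mu_i>0$ for all $i$. First suppose $A_0\succ 0$. Then Lemma \ref{lem:defreg} (with $S=\{0\}$) gives regularity. Moreover $X^*A_0X = \mathrm{diag}(\mu_1,\dots,\mu_s)\oplus I_t \oplus 0$ must be positive definite. This forces $s+t=n$ and all $\mu_i>0$, so all finite eigenvalues lie in $\Cminus$.

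Conversely, suppose $P$ is Hurwitz stable. Then it is regular, so $s+t=n$, and $0$ is not an eigenvalue, so all $\mu_i>0$. Hence $X^*A_0X=\mathrm{diag}(\mu_i)\oplus I_t\succ 0$, and therefore $A_0\succ 0$.

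The only delicate point is the bookkeeping of the sign flip $\lambda\mapsto-\lambda$ and checking that congruence preserves multiplicities. The latter is immediate, because determinants are scaled by the nonzero constant $|\det X|^2$ and kernels are mapped bijectively by $X$.
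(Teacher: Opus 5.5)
Your proof is correct and follows essentially the paper's route. You apply the congruence form of Proposition \ref{ProPencil} to $-P(-\lambda)=A_1\lambda-A_0$ to show that regular nonnegative pencils are stable, and you read off the Hurwitz part at $\lambda=0$, which matches the paper's remark that both conditions amount to $\det A_0\neq 0$. Your opening mention of $A_1\lambda-(-A_0)$ does not satisfy the hypothesis $B\succeq 0$, but you discard it immediately, and otherwise you just spell out the sign flip and semisimplicity check that the paper's three-line proof leaves implicit.
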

\begin{proof}
    By definition, Hurwitz stable implies stable implies regular. By Proposition \ref{ProPencil}, for a nonnegative pencil regular implies stable. Both $A_1 \lambda + A_0$ being Hurwitz stable and $A_0 \succ 0$ are equivalent to $\det(A_1 \cdot 0 +A_0) \neq 0$.
\end{proof}

We conclude this section with a formal proof of the equivalences between the different types of stability for the system \eqref{System} and for the polynomial matrix \eqref{Poly}. This is folklore, e.g., \cite[Theorem 4.10]{GLRMP}. However, it is still useful for completeness to include a full statement and proof of Theorem \ref{bigtheorem}, because it is surprisingly hard to find an equally explicit and general statement in the literature; for example, \cite[Theorem 4.10]{GLRMP} only treats asymptotic stability, proves only one implication, and assumes that \eqref{Poly} is monic.

\begin{theorem}\label{bigtheorem}
Let $P(\lambda)$ be a regular matrix polynomial as in \eqref{Poly}. Then, the system \eqref{System} is asymptotically stable (resp. stable) if and only if $P(\lambda)$ is Hurwitz stable (resp. stable).
\end{theorem}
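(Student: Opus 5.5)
The plan is to describe the full solution space of \eqref{System} explicitly through the Smith form of the regular polynomial $P(\lambda)$, and then read off boundedness and decay from the exponents and powers of $t$ that can occur. First, since $P(\lambda)$ is regular, there are unimodular matrix polynomials $E(\lambda),F(\lambda)$ with $E(\lambda)P(\lambda)F(\lambda)=D(\lambda)=\diag(d_1(\lambda),\dots,d_n(\lambda))$, where the $d_j$ are the invariant factors; here $\prod_j d_j$ is a nonzero constant multiple of $\det P$, so the roots of the $d_j$ are exactly the finite eigenvalues. Because $E(\frac{d}{dt})$ and $F(\frac{d}{dt})$ are invertible differential operators, with inverses again polynomial in $\frac{d}{dt}$, the substitution $y=F(\frac{d}{dt})z$ gives a bijection between smooth solutions of $P(\frac{d}{dt})y=0$ and solutions of the decoupled scalar equations $d_j(\frac{d}{dt})z_j=0$. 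Consistent initial conditions then just means choosing among these solutions.

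Each $z_j$ is an arbitrary linear combination of $t^k e^{\mu t}$, where $\mu$ is a root of $d_j$ of multiplicity $m$ and $0\le k<m$. Hence every solution $y$ is a finite sum of terms $q(t)e^{\mu t}$ with $\mu\in\Lambda(P)$ and $q$ a vector polynomial. Infinite eigenvalues only affect which $y$ arise through the differential-operator relations, and contribute no further exponentials. This gives both "if" directions. If every $\mu$ has $\mathrm{Re}\,\mu<0$, every such term tends to $0$. For stability, recall that an eigenvalue $\mu$ is semisimple exactly when each invariant factor contains $\lambda-\mu$ with exponent at most $1$. So for imaginary eigenvalues only $k=0$ occurs, those terms stay bounded, and the left half-plane terms decay.

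For the converses, I use the fact that every choice of $z_j$ yields a solution. If some eigenvalue $\mu$ has $\mathrm{Re}\,\mu\ge 0$, take $z=e^{\mu t}e_j$, where $d_j(\mu)=0$. Then $y=F(\frac{d}{dt})z=e^{\mu t}F(\mu)e_j$, which is nonzero because $F(\mu)$ is invertible. This $y$ does not tend to $0$, and it is unbounded when $\mathrm{Re}\,\mu>0$. If an imaginary $\mu$ is not semisimple, some $d_j$ has $(\lambda-\mu)^2$ as a factor, and I take $z=te^{\mu t}e_j$. Then
\[
y=e^{\mu t}\bigl(tF(\mu)e_j+F'(\mu)e_j\bigr),
\]
whose norm grows linearly since $F(\mu)e_j\ne0$ and $|e^{\mu t}|=1$. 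Regularity is a hypothesis here, and in any case it is forced by Lemma \ref{lem:nosing}.

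I expect the main obstacle to be the bookkeeping of the correspondence $y\leftrightarrow z$ when $A_d$ is singular. One must make clear that "solution" means a $d$ times differentiable $y$ with the consistent initial data that arise, and that the transformation with unimodular operators preserves the solution set. For this I would restrict to smooth solutions, or note that $P(\frac{d}{dt})y=0$ forces $y$ into a finite-dimensional space of exponential polynomials. I would also state explicitly the characterization of semisimplicity via the exponents in the invariant factors, which follows from the local Smith form at $\mu$.
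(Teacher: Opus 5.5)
Your proof is correct, but it takes a different route from the paper. The paper never uses the Smith form. It passes to the companion pencil $C_1\lambda+C_0$ and notes that $y$ tends to zero (resp.\ stays bounded) if and only if the stacked vector $z=(y^{(d-1)},\dots,y)$ does. It then applies the Weierstrass form $UC_1V=I\oplus N$, $UC_0V=-(J\oplus I)$ to split $C_1z'+C_0z=0$ into $z_1'=Jz_1$ and $Nz_2'=z_2$, and the second equation forces $z_2\equiv 0$. The conclusion follows from the classical characterization of when $e^{tJ}$ decays or stays bounded, together with the fact that the companion pencil has the same eigenvalues and multiplicities as $P(\lambda)$.

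You instead decouple $P(\frac{d}{dt})$ itself, via unimodular operators, into scalar equations $d_j(\frac{d}{dt})z_j=0$. This buys you three things. There is no linearization, so you never have to move decay or boundedness between $y$ and its derivatives. Semisimplicity is read off directly from the exponents in the invariant factors, rather than through the Jordan structure of a linearization. And the converse directions come with explicit witness solutions $e^{\mu t}F(\mu)e_j$ and $e^{\mu t}\bigl(tF(\mu)+F'(\mu)\bigr)e_j$.

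The price is the point you flagged yourself. The map $y=F(\frac{d}{dt})z$ is a bijection only on sufficiently smooth functions, while solutions are only assumed $d$ times continuously differentiable. So you must first show that every solution is an exponential polynomial. One way is to apply $\mathrm{adj}\,P(\frac{d}{dt})$ in the distributional sense, which gives $\det P(\frac{d}{dt})\,y_i=0$ for each component. The paper states this structure of solutions without proof, so this is not a weakness relative to it. Its first-order reformulation does, however, handle $C^1$ solutions without extra work: $Nz_2'=z_2$ gives $N^kz_2=(N^{k+1}z_2)'$ for all $k$, hence $z_2\equiv 0$ by backward induction from the nilpotency index.
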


\begin{proof}
    Since $P(\lambda)$ is a regular matrix polynomial, its companion pencil \cite{AMRVW, DLPV, NNT, NP} 
    \[ C_1 \lambda + C_0 : = \begin{bmatrix}
        A_d&0&\dots&\dots&0\\
        0&I&0&\dots&0\\
\vdots&\ddots&\ddots&\ddots&\vdots\\
        0&\dots&0&I&0\\
        0&\dots&0&0&I
    \end{bmatrix}  \lambda + \begin{bmatrix}
        A_{d-1} & A_{d-2} &  \dots & A_1 & A_0\\
        -I &0 & \dots& \dots & 0\\
        0& \ddots & \ddots &&\vdots \\
        \vdots&\ddots&-I&0&0\\
        0&\dots&0&-I&0
    \end{bmatrix} \in \mathbb{C}[\lambda]^{dn \times dn} \]
    is also regular and it shares with $P(\lambda)$ the same eigenvalues and their multiplicities. The solutions $y(t)$ of the differential-algebraic system \eqref{System} correspond to the bottom blocks of the solutions of the linearized system $C_1 z'(t) + C_0 z(t) = 0$, which have the structure $\displaystyle z(t) =   \begin{bmatrix}
y^{(d-1)}(t)\\
\vdots\\
y'(t)\\
y(t)
\end{bmatrix}$. Taking into account that the components of a vector solution of a system of the form \eqref{System} are linear combinations of finitely many scalar functions of the form $f_i(t) = t^{k_i} e^{\mu_i t}$, we conclude that if $\lim_{t \to \infty} y(t)=0$ (resp., if $\|y(t)\|$ is uniformly bounded for all $t \geq 0$) then $\lim_{t \to \infty} z(t)=0$ (resp., then $\|z(t)\|$ is uniformly bounded for all $t \geq 0$). The reverse implications are obvious, and therefore it suffices to prove the statement for $d=1$ and $P(\lambda)=C_1 \lambda + C_0$.

Invoking the Weierstrass canonical form for regular pencils \cite{Gan}, there exist nonsingular matrices $U,V \in GL(n,\mathbb{C})$ such that $U C_1 V = I \oplus N$ and $UC_0V= - (J \oplus I),$ where $J$ is a matrix in Jordan canonical form whose eigenvalues are the finite eigenvalues of $P(\lambda)$, and $N$ is a nilpotent matrix. Let us apply a change of variables and a block partition (coherent with the sizes of $J$ and $N$) by defining $\displaystyle \begin{bmatrix}
    z_1(t)\\
    z_2(t)
\end{bmatrix}:=V^{-1}z(t)$, and let us left-multiply the linearized system by $U$. We then conclude that $z_1'(t) = J z_1(t)$ and $N z_2'(t) = z_2(t)$. Since $N$ is nilpotent, it is easy to verify that the only smooth solution to the latter equation is $z_2(t) \equiv 0$ (and therefore any initial condition to \eqref{System} must be coherent with this constraint). On the other hand, the explicit solution to the former equation is $z_1(t)= e^{t J} z_1(0)$. Therefore, $\lim_{t \to +\infty}  z(t) =0$ if and only if $\lim_{t \to +\infty} e^{t J} = 0$, if and only if all the eigenvalues of $J$ have negative real part \cite{HJ}, if and only if the finite spectrum $\Lambda(P) \subsetneq \Cminus$. Similarly, $\| z(t) \|$ is uniformly bounded if and only if $e^{t J}$ is uniformly bounded, if and only if all the eigenvalues of $J$ have nonpositive real part and those on the imaginary axis are semisimple \cite{HJ}, if and only if the same statement holds for the finite eigenvalues of $P(\lambda)$.
\end{proof}

\section{Sufficient conditions for stability when $2 \leq d\leq 3$}\label{sec:d3}

When the system \eqref{System} is asymptotically stable with $A_k\succeq 0$ for $k=0,\ldots,d$, then necessarily $0\not\in \Lambda(P)$ for $P(\lambda)$ given in \eqref{Poly}. Therefore, in this scenario, it must be $A_0 \succ 0$. Below, we add other assumptions to this necessary one, to obtain sufficient conditions that ensure the asymptotic stability of the system \eqref{System}. To this goal, given $0 \neq x \in \mathbb{C}^n$, it is convenient to define, here and throughout the paper,
\begin{equation}\label{def:ak}
    a_k = x^* A_k x, \qquad k=0,\dots, d.
\end{equation}

Let us start from the cubic case $d=3$.  
 
\begin{theorem}\label{ThmHurwitz3}
The system \eqref{System} with $d=3$ is asymptotically stable if $A_0\succ 0 $, $A_1\succ 0 $, and (at least) one of the following conditions holds:
\begin{enumerate}
    \item
$A_2\succeq A_3\succeq 0$  and $\lambda_{\max}(A_1,A_0)\lambda_{\max}(A_2,A_3)<1$.

    \item 
$A_1\succ A_0 $, $A_2\succeq A_3\succeq 0.$

\item $A_1\succeq A_0 $, $A_2\succ A_3\succeq 0.$

\end{enumerate}
\end{theorem}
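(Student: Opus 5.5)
Since $A_0 \succ 0$, Lemma \ref{lem:defreg} shows that $P(\lambda)$ is regular. By Theorem \ref{bigtheorem} it therefore suffices to show that $\Lambda(P) \subset \Cminus$. Because $\Lambda(P)\subseteq W(P)$, we aim at the stronger property \eqref{px}: for every $x\neq 0$, the real cubic $p_x(\lambda)=a_3\lambda^3+a_2\lambda^2+a_1\lambda+a_0$, with the $a_k$ as in \eqref{def:ak}, has all its roots in $\Cminus$. The hypotheses give $a_0>0$ and $a_1>0$, and $a_2\ge a_3\ge 0$ in every case. Applying the Routh--Hurwitz criterion degree by degree, we reduce to three scalar situations. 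If $a_3>0$, the cubic is Hurwitz stable if and only if $a_2>0$ and $a_1a_2>a_0a_3$. If $a_3=0$ and $a_2>0$, the quadratic has positive coefficients and is Hurwitz stable. If $a_3=a_2=0$, the root is $-a_0/a_1<0$. Note also that $p_x\not\equiv 0$ since $a_0>0$, and that $a_3>0$ forces $a_2\ge a_3>0$. So the whole proof comes down to checking $a_1a_2>a_0a_3$ whenever $a_3>0$.

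\textbf{Condition 2.} We have $a_1>a_0>0$ and $a_2\ge a_3>0$, so $a_1a_2>a_0a_2\ge a_0a_3$.

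\textbf{Condition 3.} Here $a_1\ge a_0>0$ and $a_2>a_3>0$, since $x^*(A_2-A_3)x>0$. Hence $a_1a_2>a_1a_3\ge a_0a_3$.

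\textbf{Condition 1.} We invoke Proposition \ref{Pro2} twice.
\begin{itemize}
\item For the pencil $A_1\lambda-A_0$: $A_1\succ 0$ gives $\ker A_1=\{0\}\subseteq\ker A_0$, and $A_0\neq 0$. Thus $a_0/a_1\le\lambda_{\max}(A_1,A_0)$ for every $x\neq0$, because $a_0>0$.
\item For the pencil $A_2\lambda-A_3$: $A_2\succeq A_3\succeq 0$ implies $\ker A_2\subseteq\ker A_3$. Indeed, $x^*A_2x=0$ forces $x^*A_3x=0$, and for a semidefinite matrix this means $A_3x=0$. If $A_3=0$ there is nothing to prove, since $a_3=0$. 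Otherwise, $a_3>0$ gives $a_3/a_2\le \lambda_{\max}(A_2,A_3)$.
\end{itemize}
Multiplying the two bounds yields $\frac{a_0a_3}{a_1a_2}\le\lambda_{\max}(A_1,A_0)\lambda_{\max}(A_2,A_3)<1$, which is the required inequality $a_1a_2>a_0a_3$.

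The main point needing care is the bookkeeping of degenerate cases. When $a_3=0$ or $a_2=0$, $p_x$ drops degree and Routh--Hurwitz must be applied to the lower-degree polynomial. The other delicate step is verifying the kernel inclusion required by Proposition \ref{Pro2}, and in particular that $\lambda_{\max}(A_2,A_3)$ is well defined; this needs $A_2\neq0$, and if $A_2=0$ then $A_3=0$ and $P$ has degree at most one. The remaining steps are elementary scalar inequalities.
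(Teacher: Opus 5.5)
Your proof is correct and follows the paper's argument essentially step for step. You get regularity from Lemma~\ref{lem:defreg}, reduce to Hurwitz stability of every scalar cubic $p_x$ with degenerate degree drops handled separately, and establish the Li\'{e}nard--Chipart inequality $a_1a_2>a_0a_3$ by elementary inequalities (items 2 and 3) or by two applications of Proposition~\ref{Pro2} (item 1); the only cosmetic differences are that you split cases on $a_3$ rather than on $a_2a_3$, and for item 3 you use $a_1a_2>a_1a_3\ge a_0a_3$ instead of $a_1a_2\ge a_0a_2>a_0a_3$.
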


\begin{proof}
Let $
P(\lambda)$ be given as in \eqref{Poly} and $a_k$ be as in \eqref{def:ak}. By Lemma \ref{lem:defreg}, $P(\lambda)$ is regular.
\begin{enumerate}
    \item  Suppose first $a_2a_3=0$. Since $A_2\succeq A_3\succeq 0 $, either $a_2=a_3=0$ or $a_2>a_3=0$. 
Hence,
$$p_x(\lambda)=\begin{cases} a_2\lambda^2+a_1 \lambda+a_0, &\mathrm{if} \ a_2>a_3=0; \\
a_1 \lambda+a_0, &\mathrm{if} \ a_2=a_3=0.
\end{cases}$$
Since, in addition, $a_1 >0$ and $a_0 > 0$, we have that $p_x(\lambda)$ is  Hurwitz stable by classical results on scalar polynomials \cite{Gan}.

Now assume that $a_2a_3\neq 0$, or equivalently that $a_2>0$ and $a_3>0$. 
Note that $a_0>0$ and that $\ker A_1 =  \ker A_0 = \{ 0 \}$. Hence, by Proposition \ref{Pro2}, 
\begin{equation}\label{eq1.1}
\frac{a_0}{a_1}\leq \lambda_{\max}(A_1,A_0).
\end{equation}
Moreover, $A_2\succeq A_3\succeq 0 $ implies that
$\ker A_2\subseteq \ker A_3$. Hence,
again by Proposition \ref{Pro2},
\begin{equation}\label{eq1.2}
\frac{a_3}{a_2}\leq \lambda_{\max}(A_2,A_3).
\end{equation}
Combining \eqref{eq1.1}, \eqref{eq1.2}, and the assumptions (1), we conclude that $a_0>0$, $a_2>0$, and $\displaystyle \begin{vmatrix}
        a_2& a_0\\
        a_3& a_1
    \end{vmatrix}=a_1 a_2-a_0 a_3>0$.
In view of the classical stability criterion of Li\'{e}nard and Chipart  \cite[p. 221, Theorem 11]{Gan}, this means that
the real polynomial $p_x(\lambda)$ is Hurwitz stable. Therefore, $P(\lambda)$ is Hurwitz stable and the system \eqref{System}  is asymptotically stable.
\item By assumption, $a_1 > a_0 > 0$ and $a_2 \geq a_3 \geq 0$. If $a_2=0$, then $p_x(\lambda)=a_1 \lambda + a_0$ which is manifestly Hurwitz stable. If instead $a_2 > 0$, then $a_1 a_2 > a_0 a_2 \geq a_0 a_3$ and therefore $p_x(\lambda)=a_3 \lambda^3 + a_2 \lambda^2 + a_1 \lambda + a_0$ is Hurwitz stable by the Li\'{e}nard-Chipart criterion (or by standard results if $a_3=0$).
\item In this case we have $a_1 \geq a_0 > 0$ and $a_2 > a_3 \geq 0$, which leads to the same proof as in the second part of the previous item.
\end{enumerate}

\end{proof}

\begin{remark}
    Switching the roles of $A_3,A_0$ and of $A_2,A_1$ corresponds to taking the reversal of \eqref{Poly}, i.e., considering the matrix polynomial $\lambda^3 P\left(\frac{1}{\lambda}\right) = A_0 \lambda^3 + A_1 \lambda^2 + A_2 \lambda + A_3$. It is known \cite{DLPV, DN, GLRMP, MMMM, NNT, NP} that, except for zero eigenvalues, the finite eigenvalues of the reversal are the reciprocals of the finite eigenvalues of the original polynomial. Since $\Cminus$ is invariant under the mapping $z \mapsto z^{-1}$, we conclude that an analogue of Theorem \ref{ThmHurwitz3} holds after switching the roles of the coefficient matrices in this way, and \emph{provided that the reversal polynomial does not have infinite eigenvalues}, or equivalently that $P(\lambda)$ does not have zero eigenvalues. This subtlety is necessary because $0 \notin \Cminus$, and once again is tantamount to still requiring $A_0 \succ 0$. For example, applying this idea to item 1 in Theorem \ref{ThmHurwitz3}, another valid sufficient condition for asymptotic stability when $d=3$ is given by $A_3 \succ 0, A_2 \succ 0$, $A_1 \succeq A_0 \succ 0$ and $\lambda_{\max}(A_2,A_3) \lambda_{\max}(A_1,A_0)<1$. Although, for the sake of conciseness, we will not repeat an explicit remark, similar comments apply to all the results that we derive in our paper: Up to possibly adding the assumption $A_0 \succ 0$, one can always apply each criterion to the reversal $\lambda^d P\left(\frac{1}{\lambda}\right)$ and obtain an equally valid sufficient condition.
\end{remark}

Our results for cubic polynomials also yield a criterion for quadratic polynomials, that we state in Corollary \ref{Cor32}.

\begin{corollary}\label{Cor32}
The system \eqref{System} with $d=2$ is asymptotically stable 
if $A_0\succ 0 $, $A_1\succ 0$, $A_2\succeq 0 $. 
\end{corollary}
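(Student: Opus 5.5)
The plan is to deduce the corollary from item~2 (or item~3) of Theorem \ref{ThmHurwitz3} by embedding the quadratic polynomial into a cubic one with a zero leading coefficient. As a fallback that avoids any subtlety with the degree, I would also check the statement directly via the scalar polynomials $p_x(\lambda)$.

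\textbf{Direct route.} Let $P(\lambda)=A_2\lambda^2+A_1\lambda+A_0$. By Lemma \ref{lem:defreg}, regularity follows from $A_0\succ 0$. Fix $x\neq 0$ and set $a_k=x^*A_kx$ as in \eqref{def:ak}. Then $a_0>0$, $a_1>0$ and $a_2\ge 0$. There are two cases.
\begin{enumerate}
\item If $a_2=0$, then $p_x(\lambda)=a_1\lambda+a_0$, whose only root $-a_0/a_1$ is negative.
\item If $a_2>0$, then $p_x(\lambda)=a_2\lambda^2+a_1\lambda+a_0$ is a real quadratic with all coefficients positive. Such a quadratic is Hurwitz stable: for real roots, both the sum and the product signs force negative roots; for a complex pair, the real part equals $-a_1/(2a_2)<0$.
\end{enumerate}
So \eqref{px} holds. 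Since every finite eigenvalue $\mu$ of $P$ satisfies $p_x(\mu)=0$ for some eigenvector $x\neq 0$, we get $\Lambda(P)\subseteq\Cminus$. Hence $P$ is Hurwitz stable, and Theorem \ref{bigtheorem} gives asymptotic stability of \eqref{System}.

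\textbf{Route via the theorem.} Alternatively, the corollary should be phrased as a consequence of Theorem \ref{ThmHurwitz3}. Take $A_3=0$ in the cubic setting. Then $A_2\succeq A_3=0$ holds, and the quantity $\lambda_{\max}(A_2,A_3)$ equals $0$ when $A_2\neq 0$. The only obstacle is this degenerate case: if $A_2=0$, the pencil $A_2\lambda-A_3$ is identically zero and $\lambda_{\max}$ is undefined. Since the paper requires $A_d\neq 0$, we have $A_2\neq 0$, so $\lambda_{\max}(A_2,0)=0$. The hypothesis of item~1 then reads $\lambda_{\max}(A_1,A_0)\cdot 0<1$, which is automatic.

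The remaining subtlety is that Theorem \ref{ThmHurwitz3} formally concerns $d=3$ with $A_3\neq 0$. Its proof, however, only uses the scalar inequalities on $a_k$, which are unaffected by $a_3=0$. For this reason I would present the direct scalar argument above as the actual proof and mention the reduction only as motivation.
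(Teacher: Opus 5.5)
Your proposal is correct and matches the paper. The paper's proof is exactly your second route: take $A_3=0$, note that $\lambda_{\max}(A_2,0)=0$ because $A_2\neq 0$, and invoke item~1 of Theorem~\ref{ThmHurwitz3}. Your direct scalar argument simply inlines the $a_2a_3=0$ branch of that theorem's proof, and it also sidesteps the formal point that $d=3$ presupposes $A_3\neq 0$.

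There is one slip in your opening sentence. Items~2 and~3 of Theorem~\ref{ThmHurwitz3} require $A_1\succeq A_0$, which is not among the hypotheses here, so only item~1 is usable, as you in fact do later.
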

\begin{proof}
    $A_2 \neq 0$, hence $\lambda_{\max}(A_2,0)=0$ and item 1 in Theorem \ref{ThmHurwitz3} applies.
\end{proof}

\begin{remark}
Other stability criteria for quadratic polynomials have appeared in the literature. The  overdamping condition \cite[Equation (3.17)]{TM}, which confines the spectrum to the negative real axis (a much stronger property than Hurwitz stability), requires positive definite coefficients and imposes the nonlinear condition $(x^* A_1 x)^2 > 4(x^* A_2 x)(x^* A_0 x)$ for all $x \neq 0$. In comparison, Corollary \ref{Cor32} ensures asymptotic stability under weaker, and easier to check, requirements. Recently, in \cite[Corollary 18]{RB} Roy and Bora also obtained a sufficient condition for quadratic polynomials, applicable also to non-Hermitian coefficients, and based on the inequality $\|(A_0 + A_1 + A_2)^{-1}\|_p^{-1} > \| A_0 - A_1 + A_2 \|_p + 2 \| A_0 - A_2 \|_p$ for $p \in \{1, 2, \infty\}$. The Roy-Bora condition can be tested in $O(n^3)$ complexity, similarly to ours. Corollary \ref{Cor32} and \cite[Corollary 18]{RB} do not imply each other. Consider for example the family of stable scalar quadratic polynomials such that $A_0=A_1=1$ and $2 \leq A_2$, whose roots have real part  $ -(2 A_2)^{-1} \in [-\frac14,0[$. For these polynomials, Corollary \ref{Cor32} applies but \cite[Corollary 18]{RB} fails. Conversely, it is also possible to construct stable Hermitian quadratic polynomials with indefinite coefficients (and hence beyond the applicability of Corollary \ref{Cor32})  that satisfy the inequality of \cite[Corollary 18]{RB}.
\end{remark}

We now focus on the case when the system \eqref{System} has positive definite coefficients and $d=3$.

\begin{lemma}\label{LemHurwitz3}
The system \eqref{System} with $d=3$ is asymptotically stable 
if $A_k\succ 0 $ for $k=0,1,2,3$ and (at least) one of the following conditions holds:
\begin{enumerate}
    \item There exists $\theta\in [0,1]$ such that
\begin{equation}\label{Lem3.2.1}
\lambda_{\max}(A_1,\theta A_0+(1-\theta)A_3)\lambda_{\max}(A_2,(1-\theta)A_0+\theta A_3)<1.
\end{equation}
\item There exists $\theta\in [0,1]$ such that
\begin{equation}\label{Lem3.2.2}
A_1\succeq \theta A_0+(1-\theta)A_3, \ A_2\succeq (1-\theta)A_0+\theta A_3,
\end{equation}
in which at least one inequality is strict.
\end{enumerate}
\end{lemma}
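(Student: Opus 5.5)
We follow the scheme of the proof of Theorem \ref{ThmHurwitz3}: since $A_0 \succ 0$, Lemma \ref{lem:defreg} gives regularity of $P(\lambda)$. Theorem \ref{bigtheorem} then reduces asymptotic stability of \eqref{System} to Hurwitz stability of $P(\lambda)$. For this it suffices, as discussed around \eqref{px}, to show that for every $0 \neq x$ the real cubic $p_x(\lambda)=a_3\lambda^3+a_2\lambda^2+a_1\lambda+a_0$, with $a_k$ as in \eqref{def:ak}, is Hurwitz stable. Because all $A_k \succ 0$, we have $a_k>0$ for all $k$. Hence, by the Li\'{e}nard--Chipart criterion \cite{Gan}, Hurwitz stability of $p_x$ is equivalent to the single inequality $a_1a_2>a_0a_3$. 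The whole task is therefore to derive $a_1a_2>a_0a_3$ from each of the two assumptions.

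The key elementary step is an AM--GM-type bound. For $\theta\in[0,1]$ we compare the product
\[ (\theta a_0+(1-\theta)a_3)\,((1-\theta)a_0+\theta a_3) \]
with $a_0a_3$. Expanding it gives $\theta(1-\theta)(a_0^2+a_3^2)+(\theta^2+(1-\theta)^2)a_0a_3$. Using $a_0^2+a_3^2\geq 2a_0a_3$, this is at least
\[ \bigl(2\theta(1-\theta)+\theta^2+(1-\theta)^2\bigr)a_0a_3=a_0a_3. \]
So the product of the two convex combinations always dominates $a_0a_3$. We expect this to be the only nontrivial observation; everything else is routine bookkeeping.

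For item 1, set $B_1:=\theta A_0+(1-\theta)A_3$ and $B_2:=(1-\theta)A_0+\theta A_3$. Both are positive definite, since the weights are nonnegative, sum to $1$, and $A_0,A_3\succ 0$. Also $\ker A_1=\ker A_2=\{0\}$, so Proposition \ref{Pro2} applies to both pencils $A_1\lambda-B_1$ and $A_2\lambda-B_2$. It gives $x^*B_1x/a_1\leq\lambda_{\max}(A_1,B_1)$ and $x^*B_2x/a_2\leq\lambda_{\max}(A_2,B_2)$. Multiplying these and using \eqref{Lem3.2.1} yields
\[ (x^*B_1x)(x^*B_2x)<a_1a_2. \]
Combined with the bound $(x^*B_1x)(x^*B_2x)\geq a_0a_3$, this gives $a_0a_3<a_1a_2$.

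For item 2, \eqref{Lem3.2.2} gives $a_1\geq x^*B_1x>0$ and $a_2\geq x^*B_2x>0$, with at least one of the two inequalities strict because the corresponding matrix inequality is strict and $x\neq0$. Multiplying these positive quantities gives $a_1a_2>(x^*B_1x)(x^*B_2x)\geq a_0a_3$. In both cases the Li\'{e}nard--Chipart condition holds for every $x\neq 0$, so $W(P)\subset\Cminus$, which contains $\Lambda(P)$, and the claim follows.
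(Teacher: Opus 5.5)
Your proof is correct and follows the paper's argument: you apply Proposition \ref{Pro2} to the two pencils and then verify the Li\'{e}nard--Chipart inequality $a_1a_2>a_0a_3$ for every $p_x$, and your explicit bound $(\theta a_0+(1-\theta)a_3)((1-\theta)a_0+\theta a_3)\geq a_0a_3$ supplies the step the paper leaves implicit. The only difference is in item 2, where you compare the quadratic forms directly, while the paper first deduces that both $\lambda_{\max}$ values are at most $1$ (one strictly) and then invokes item 1; both routes are valid.
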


\begin{proof}
Let $
P(\lambda)$ be given as in \eqref{Poly} and $a_k$ as in \eqref{def:ak}. By Lemma \ref{lem:defreg}, $P(\lambda)$ is regular.
\begin{enumerate}
    \item  By Proposition \ref{Pro2},
\begin{align}
&\frac{\theta a_0+(1-\theta)a_3}{a_1}\leq \lambda_{\max}(A_1,\theta A_0+(1-\theta)A_3), \label{prima} \\
&\frac{(1-\theta)a_0+\theta a_3}{a_2}\leq \lambda_{\max}(A_2,(1-\theta)A_0+\theta A_3). \label{seconda}
\end{align}
Combining \eqref{prima} and  \eqref{seconda} with \eqref{Lem3.2.1}, we conclude that $a_0 a_3 < a_1 a_2$. Since, in addition, $a_0 > 0$ and $a_2 > 0$, it follows by the Li\'{e}nard-Chipart criterion that condition \eqref{px} holds. Hence, $P(\lambda)$ is Hurwitz stable and the system \eqref{System}  is asymptotically stable.
    \item  It follows from \eqref{Lem3.2.2} that 
$$
\lambda_{\max}(A_1,\theta A_0+(1-\theta)A_3)\leq 1,\quad \lambda_{\max}(A_2,(1-\theta)A_0+\theta A_3) \leq 1,
$$
and at least one inequality is strict. Hence, \eqref{Lem3.2.1} holds. The statement then follows from item 1.
\end{enumerate}
\end{proof}

\begin{corollary}\label{Cor36}
The system \eqref{System}  with $d=3$ is asymptotically stable 
if $A_k\succ 0 $ for $k=0,1,2,3$,  and (at least) one of the following conditions holds:
\begin{enumerate}
      \item
$\lambda_{\max}(A_1, A_3)\lambda_{\max}(A_2, A_0)<1.$
      \item 
$\lambda_{\max}(A_1, A_0+A_3)\lambda_{\max}(A_2, A_0+A_3)<4.$
\item 
 $A_1\succeq A_3$,  $A_2\succeq A_0 $, in which at least one inequality is strict.

\item $2 A_1\succeq A_0 +A_3$, $2 A_2\succeq A_0 +A_3$, in which at least one inequality is strict.
\end{enumerate}
\end{corollary}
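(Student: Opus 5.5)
The plan is to derive each item of Corollary \ref{Cor36} by specializing the parameter $\theta$ in Lemma \ref{LemHurwitz3}. The key tool is that $\lambda_{\max}$ scales linearly in its second argument: for $c>0$, $\lambda_{\max}(A,cB)=c\,\lambda_{\max}(A,B)$. This holds because the finite eigenvalues of $A\lambda-cB$ are $c$ times those of $A\lambda-B$. Equivalently, by Proposition \ref{ProPencil}, the congruence $X$ that brings $A\lambda - B$ to the form \eqref{StrictEquvalence} sends $A\lambda-cB$ to $\bigoplus_i(\lambda-c\mu_i)\oplus 0$. This applies since the coefficients are positive definite, so $t=0$.

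For item 1, I will take $\theta=0$ in item 1 of Lemma \ref{LemHurwitz3}. Condition \eqref{Lem3.2.1} then reads $\lambda_{\max}(A_1,A_3)\lambda_{\max}(A_2,A_0)<1$, which is exactly the hypothesis. For item 3, I will take $\theta=0$ in item 2 of Lemma \ref{LemHurwitz3}. Condition \eqref{Lem3.2.2} becomes $A_1\succeq A_3$ and $A_2\succeq A_0$, with at least one inequality strict, again exactly the hypothesis.

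For item 2, I will take $\theta=\tfrac12$ in item 1 of Lemma \ref{LemHurwitz3}. Condition \eqref{Lem3.2.1} becomes
\[\lambda_{\max}\bigl(A_1,\tfrac12(A_0+A_3)\bigr)\,\lambda_{\max}\bigl(A_2,\tfrac12(A_0+A_3)\bigr)<1.\]
By the scaling property this equals $\tfrac14\lambda_{\max}(A_1,A_0+A_3)\lambda_{\max}(A_2,A_0+A_3)$, so the condition is equivalent to the hypothesis that this product is $<4$. For item 4, I will take $\theta=\tfrac12$ in item 2 of Lemma \ref{LemHurwitz3}. Multiplying the resulting inequalities $A_1\succeq \tfrac12(A_0+A_3)$ and $A_2\succeq \tfrac12(A_0+A_3)$ by $2$ gives the stated conditions, and strictness is preserved.

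The only step that needs any care is the scaling identity for $\lambda_{\max}$. It is immediate from the congruence form of Proposition \ref{ProPencil}, or from the definition \eqref{def:eig}, since $\operatorname{rank}(A\mu-cB)=\operatorname{rank}\bigl(c(A\tfrac{\mu}{c}-B)\bigr)$. The remaining steps are direct substitutions.
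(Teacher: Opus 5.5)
Your proposal is correct and matches the paper's proof, which simply specializes Lemma \ref{LemHurwitz3} to $\theta=0$ (giving items 1 and 3) and $\theta=\frac12$ (giving items 2 and 4). You also spell out the scaling identity $\lambda_{\max}(A,cB)=c\,\lambda_{\max}(A,B)$ for $c>0$, which is needed to convert the $\theta=\frac12$ condition into the stated bound $<4$ and which the paper leaves implicit.
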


\begin{proof}
It suffices to specialize Lemma \ref{LemHurwitz3} to $\theta=0$ and $\theta=\frac12$.
\end{proof}

Now we turn to analyzing the stability of the system \eqref{System}
with $d \leq 3$.  

\begin{lemma}\label{LemQuasi}
Let $d=3$ and let $P(\lambda)$ be given as in \eqref{Poly}. Then $P(\lambda)$ is quasi-stable 
if 
$A_1\succeq A_0\succeq 0$,  $A_1+A_2+A_3\succ 0$, and (at least) one of the following statements holds:
\begin{enumerate}
    \item$A_2\succeq A_3\succeq 0$.
    \item $A_2\succeq A_0$, $A_1\succeq A_3\succeq 0$.
\end{enumerate}
\end{lemma}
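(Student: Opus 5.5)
The plan is to reduce the statement to a scalar quasi-stability property of $p_x(\lambda)=x^*P(\lambda)x$ for every $x\neq 0$, in the spirit of \eqref{px}, and to prove that property by a perturbation argument around the Li\'{e}nard--Chipart criterion.

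\textbf{Regularity and reduction.} Regularity follows from Lemma \ref{lem:defreg} with $S=\{1,2,3\}$, since all $A_k\succeq 0$ and $A_1+A_2+A_3\succ 0$. Now let $\mu\in\Lambda(P)$. Because $P$ is regular, there is $x\neq 0$ with $P(\mu)x=0$, hence $p_x(\mu)=0$. Here
\[ p_x(\lambda)=a_3\lambda^3+a_2\lambda^2+a_1\lambda+a_0, \]
with the $a_k$ as in \eqref{def:ak}. Moreover $p_x\not\equiv 0$, because $a_1+a_2+a_3=x^*(A_1+A_2+A_3)x>0$. So it suffices to show that every such nonzero real polynomial has all its roots in the closed left half plane.

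\textbf{Key scalar inequality.} The hypotheses give $a_1\geq a_0\geq 0$ and $a_2,a_3\geq 0$. In case 1, $a_2\geq a_3\geq 0$, so $a_1a_2\geq a_0a_3$. In case 2, $a_1\geq a_3\geq 0$ and $a_2\geq a_0\geq 0$, so again $a_1a_2\geq a_3a_0$. In both cases the coefficients are nonnegative and the non-strict Li\'{e}nard--Chipart determinant condition $a_1a_2-a_0a_3\geq 0$ holds.

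\textbf{Perturbation argument.} For $\varepsilon>0$, define
\[ q_\varepsilon(\lambda):=p_x(\lambda)+\varepsilon(\lambda^3+2\lambda^2+2\lambda+1). \]
All its coefficients are strictly positive, and
\[ (a_1+2\varepsilon)(a_2+2\varepsilon)>(a_0+\varepsilon)(a_3+\varepsilon), \]
since $a_1a_2\geq a_0a_3$ and each remaining term on the left dominates the corresponding term on the right. By \cite[p. 221, Theorem 11]{Gan}, $q_\varepsilon$ is Hurwitz stable. As $\varepsilon\to 0$, $q_\varepsilon\to p_x$ coefficientwise, and $p_x\not\equiv 0$. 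By continuity of roots (Hurwitz's theorem on uniform limits of holomorphic functions on compact sets), every root of $p_x$ is a limit of roots of $q_\varepsilon$. Roots of $q_\varepsilon$ that escape to infinity, which happens when $\deg p_x<3$, are irrelevant. Hence every root of $p_x$ lies in the closure of $\Cminus$, i.e., has nonpositive real part, and therefore $\Lambda(P)$ lies in the closed left half plane.

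\textbf{Main obstacle.} The delicate step is this limiting argument in degenerate situations: $a_3=0$ (degree drop), $a_0=0$ (root at $0$), or even $p_x$ linear, such as $a_1\lambda$. I would handle these uniformly through the Hurwitz-theorem formulation on compact sets rather than by case analysis. As a sanity check, the low-degree cases can also be verified directly: a quadratic or linear polynomial with nonnegative coefficients that is not identically zero has its roots in the closed left half plane.
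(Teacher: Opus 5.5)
Your proof is correct, but it takes a different route from the paper's. Both arguments reduce the claim to the quasi-stability of each scalar polynomial $p_x$, and both rest on the inequality $a_1a_2\ge a_0a_3$. The paper then argues by cases:
\begin{itemize}
\item if $a_2=0$, it is left with $a_1\lambda+a_0$ (or, in item 2, with $a_3\lambda^3+a_1\lambda$);
\item if $a_2>0$, it writes $p_x(\lambda)=p_x^{[e]}(\lambda^2)+\lambda p_x^{[o]}(\lambda^2)$ and applies the quasi-stability criterion for even and odd parts \cite[Theorem 4.9]{AGT}. It treats separately the coprime case $a_1a_2>a_0a_3$ and the common-root case $a_1a_2=a_0a_3$, the latter with subcases $a_3>0$ and $a_3=0$.
\end{itemize}

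You instead push $p_x$ into the interior of the Hurwitz region, use only the strict Li\'{e}nard--Chipart criterion, and pass to the limit with Hurwitz's theorem. Your cross-term check $2\varepsilon(a_1+a_2)\ge\varepsilon(a_0+a_3)$ holds in both items, provided the terms are paired correctly: $a_1$ with $a_0$ and $a_2$ with $a_3$ in item 1, but $a_1$ with $a_3$ and $a_2$ with $a_0$ in item 2. You should state this pairing explicitly rather than saying ``corresponding term.''

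Your route avoids both the external even--odd criterion and the case split, since degree drop, roots at $0$, and the equality case are all absorbed by the limiting argument. It also exposes a general principle: a non-strict version of a strict Hurwitz criterion yields quasi-stability. That principle would equally justify Corollaries \ref{CorQuasi} and \ref{CorQuasi2}. Indeed, if you perturb $a_0,a_3$ by a much smaller amount than $a_1,a_2$ (say $\delta=\varepsilon^2/(2(a_0+a_3+1))$), the same argument needs only $a_k\ge 0$, $p_x\not\equiv 0$ and $a_1a_2\ge a_0a_3$. The paper's approach, in return, is a direct finite verification with no limiting argument, at the price of a less standard criterion and several subcases.
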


\begin{proof}
By Lemma \ref{lem:defreg}, $P(\lambda)$ is regular. Define $a_k$ as in \eqref{def:ak}.
\begin{enumerate}
    \item We have $a_1+a_2+a_3 > 0$. Moreover, $a_1 \geq a_0 \geq 0$ and $a_2 \geq a_3 \geq 0$, implying $a_1 a_2 \geq a_0 a_3$. Suppose first $a_2=0$, implying $a_3=0$ and $a_1 > 0$. Then,  $p_x(\lambda)=a_1 \lambda + a_0$, which is also quasi-stable since $a_0 \geq 0$. If instead $a_2 > 0$, write $p_x(\lambda)=p_x^{[e]}(\lambda^2)+\lambda p_x^{[o]}(\lambda^2)$ with 
    $$
p_x^{[e]}(\lambda):=a_2\lambda+a_0,\quad p_x^{[o]}(\lambda):=a_3\lambda+a_1.
$$
If $a_2 a_1 > a_0 a_3$ is a strict inequality, then $-a_3 p_x^{[e]}(\lambda)+a_2 p_x^{[o]}(\lambda) = a_2 a_1 - a_3 a_0 \neq 0$, showing that $p_x^{[e]}$ and $p_x^{[o]}$ are coprime and $p_x(\lambda)$ is quasi stable by \cite[Theorem 4.9]{AGT}. If instead $a_2 a_1 = a_3 a_0$, then there are two subcases. If $a_3 >0$, then $p_x^{[e]}$ is a nonzero constant multiple of $p_x^{[o]}$, and both polynomials have a unique root $-\frac{a_0}{a_2} = - \frac{a_1}{a_3} \leq 0$, implying quasi-stability by \cite[Theorem 4.9]{AGT}. If $a_3=0$, then $0=a_1\geq a_0=0$, and $p_x(\lambda)=a_2 \lambda^2$, which is manifestly quasi-stable.

\item In this case, we have $a_1+a_2+a_3 > 0$, $a_1 \geq a_3 \geq 0$, and $a_2 \geq a_0 \geq 0$, which also imply $a_1 a_2 \geq a_0 a_3$. From this point, we can follow almost the same argument as for item 1, except for checking that $p_x(\lambda)=a_3 \lambda^3 + a_1 \lambda$ is quasi-stable when $a_3,a_1>0$.
\end{enumerate}
\end{proof}

\begin{remark}\label{rem:comparison}
   Item 1 in Lemma \ref{LemQuasi} strictly improves  on the condition given in \cite[Theorem 6.8]{MMW}, which assumes $A_3 \succ 0$, $A_2 \succ 0$, $A_0 \succ 0$, $A_1 \succeq 0$, $A_2 + A_1 \succ 0$, $A_2 \succeq A_3$, and $A_1 \succeq A_0$. Indeed, suppose that the assumptions of \cite[Theorem 6.8]{MMW} hold. Then, $A_1 \succeq A_0 \succ 0$, $A_2 \succeq A_3 \succ 0$, and $A_1 + A_2 + A_3 \succ 0$ all hold, and hence the assumptions of item 1 in Lemma \ref{LemQuasi} hold. The reverse implication does not hold, as can be seen by taking $A_3=A_2=A_1=I$ and $A_0=0$.
\end{remark}

\begin{corollary}\label{CorQuasi}
 Let $d=3$ and let $P(\lambda)$ be given as in \eqref{Poly}. $P(\lambda)$ is quasi-stable if $A_0\succeq 0$, $A_1\succ 0$, $A_2\succeq A_3\succeq 0$, and $\lambda_{\max}(A_1,A_0)\lambda_{\max}(A_2,A_3)\leq 1$.
\end{corollary}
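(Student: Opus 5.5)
Our plan is to follow the numerical-range strategy of item 1 of Theorem \ref{ThmHurwitz3}, weakening strict inequalities to non-strict ones and replacing the Li\'{e}nard--Chipart criterion by the quasi-stability argument from the proof of Lemma \ref{LemQuasi}. Since $A_1 \succ 0$, Lemma \ref{lem:defreg} gives regularity of $P(\lambda)$. It then suffices to show that, for every $x\neq 0$, the real polynomial $p_x(\lambda)=a_3\lambda^3+a_2\lambda^2+a_1\lambda+a_0$ is not identically zero and has all roots in the closed left half-plane. Indeed, every finite eigenvalue $\mu$ of $P(\lambda)$ has an eigenvector $x$ with $p_x(\mu)=0$. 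Nonvanishing is immediate, because $a_1>0$.

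Fix $x\neq 0$, so that $a_1>0$, $a_0\ge 0$ and $a_2\ge a_3\ge 0$. The key inequality to establish is $a_1a_2\ge a_0a_3$.
\begin{itemize}
\item If $a_3=0$ or $a_0=0$, the inequality is trivial.
\item Otherwise $a_0>0$ and $a_3>0$, hence also $a_2>0$. Since $\ker A_1=\{0\}\subseteq\ker A_0$, Proposition \ref{Pro2} gives $a_0/a_1\le\lambda_{\max}(A_1,A_0)$. Since $A_2\succeq A_3$ implies $\ker A_2\subseteq\ker A_3$, Proposition \ref{Pro2} also gives $a_3/a_2\le\lambda_{\max}(A_2,A_3)$.
\end{itemize}
In the second case, multiplying the two bounds and using the hypothesis $\lambda_{\max}(A_1,A_0)\lambda_{\max}(A_2,A_3)\le 1$ yields $a_0a_3\le a_1a_2$. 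We must also handle $A_2=0$, where $\lambda_{\max}(A_2,A_3)$ is undefined. In that case $A_3=0$ as well, the condition is read as vacuous, and $p_x=a_1\lambda+a_0$ is quasi-stable.

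With $a_1a_2\ge a_0a_3$ in hand, we reproduce the case analysis of item 1 in the proof of Lemma \ref{LemQuasi}.
\begin{itemize}
\item If $a_2=0$, then $a_3=0$ and $p_x=a_1\lambda+a_0$ has its root at $-a_0/a_1\le 0$.
\item If $a_2>0$, split $p_x(\lambda)=p_x^{[e]}(\lambda^2)+\lambda p_x^{[o]}(\lambda^2)$ with $p_x^{[e]}(\lambda)=a_2\lambda+a_0$ and $p_x^{[o]}(\lambda)=a_3\lambda+a_1$.
\begin{itemize}
\item When $a_1a_2>a_0a_3$, these two polynomials are coprime, and \cite[Theorem 4.9]{AGT} gives quasi-stability.
\item When $a_1a_2=a_0a_3$, we have $a_0a_3=a_1a_2>0$, so $a_3>0$. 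Then $p_x^{[e]}$ and $p_x^{[o]}$ are proportional with common root $-a_0/a_2\le 0$, and \cite[Theorem 4.9]{AGT} again applies.
\end{itemize}
\end{itemize}
The subcase $a_3=0$ with equality cannot occur here, since $a_1>0$. Alternatively, the case $a_1a_2=a_0a_3$ can be checked directly: $p_x=(a_2\lambda^2+a_0)(\tfrac{a_3}{a_2}\lambda+1)$, whose roots are purely imaginary or the negative real number $-a_2/a_3$.

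The main obstacle is the boundary case of equality in $a_1a_2\ge a_0a_3$, where Hurwitz stability fails and imaginary roots appear. We plan to handle it through the explicit factorization above rather than by any perturbation argument. Everything else reduces to the earlier results.
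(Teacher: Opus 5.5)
Your proof is correct and follows the paper's route. You derive $a_1a_2\ge a_0a_3$ for every $x\neq 0$ from Proposition~\ref{Pro2} together with the hypothesis $\lambda_{\max}(A_1,A_0)\lambda_{\max}(A_2,A_3)\le 1$, then rerun the case analysis of item~1 of Lemma~\ref{LemQuasi}; your explicit factorization in the equality case is a nice self-contained check, and your separate treatment of $A_2=0$ is unnecessary since \eqref{Poly} requires $A_3\neq 0$, so $A_2\succeq A_3$ forces $A_2\neq 0$.
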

\begin{proof}
    The assumptions yield the same inequalities as in the proof of Lemma \ref{LemQuasi}.
\end{proof}

\begin{corollary}\label{CorQuasi2}
Let $d=3$ and let $P(\lambda)$ be given as in \eqref{Poly}. Then $P(\lambda)$ is quasi-stable 
if $A_k\succ 0 $ for $k=0,1,2,3$, and (at least) one of the following statements is satisfied:
\begin{enumerate}
    \item
  $\lambda_{\max}(A_1, A_0)\lambda_{\max}(A_2, A_3)\leq 1.$

      \item 
$\lambda_{\max}(A_1, A_3)\lambda_{\max}(A_2, A_0)\leq 1.$
      \item
$\lambda_{\max}(A_1, A_0+A_3)\lambda_{\max}(A_2, A_0+A_3)\leq 4.$
\item
 $A_1\succeq A_3$,  $A_2\succeq A_0 $.

\item $2 A_1\succeq A_0 +A_3$, $2 A_2\succeq A_0 +A_3$.
\end{enumerate}
\end{corollary}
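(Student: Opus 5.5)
The plan is to reduce every item to the scalar inequality $a_1a_2\geq a_0a_3$ with all $a_k>0$, where $a_k=x^*A_kx$ for $0\neq x\in\mathbb C^n$ as in \eqref{def:ak}. Then we conclude quasi-stability of $p_x(\lambda)$ exactly as in the proof of Lemma \ref{LemQuasi}. Regularity of $P(\lambda)$ follows from Lemma \ref{lem:defreg}, since $A_0\succ 0$. Because $W(P)\supseteq\Lambda(P)$, it then suffices to show that, for every $x\neq 0$, the real cubic $p_x(\lambda)=a_3\lambda^3+a_2\lambda^2+a_1\lambda+a_0$ has all its roots in the closed left half plane.

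\textbf{Step 1: reducing items 1--3 to $a_1a_2\geq a_0a_3$.} Since all coefficients are positive definite, every kernel is trivial, so Proposition \ref{Pro2} applies to every pencil involved. For item 1 we get $a_0/a_1\leq\lambda_{\max}(A_1,A_0)$ and $a_3/a_2\leq\lambda_{\max}(A_2,A_3)$, hence $a_0a_3\leq a_1a_2$. Item 2 is the same with $\theta=0$ in the pattern of Lemma \ref{LemHurwitz3}, giving $a_3/a_1$ and $a_0/a_2$. For item 3 we use $\theta=\frac12$. This gives $(a_0+a_3)^2\leq 4a_1a_2$, and by AM--GM $4a_0a_3\leq(a_0+a_3)^2$, so $a_0a_3\leq a_1a_2$.

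\textbf{Step 2: reducing items 4--5.} Item 4 gives directly $a_1\geq a_3$ and $a_2\geq a_0$, so $a_1a_2\geq a_0a_3$. Item 5 gives $4a_1a_2\geq(a_0+a_3)^2\geq 4a_0a_3$. Alternatively, both follow from items 2--3, because $\lambda_{\max}\le 1$ (respectively $\le 2$) under those matrix inequalities, by the congruence form in Proposition \ref{Pro2}.

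\textbf{Step 3: quasi-stability of the scalar cubic.} With $a_k>0$ and $a_1a_2\geq a_0a_3$ we split into two cases.
\begin{itemize}
\item If the inequality is strict, the Li\'enard--Chipart criterion gives Hurwitz stability of $p_x$.
\item If equality holds, we split $p_x$ into its even and odd parts, $p_x^{[e]}(\lambda)=a_2\lambda+a_0$ and $p_x^{[o]}(\lambda)=a_3\lambda+a_1$. These are proportional, with common root $-a_0/a_2<0$. Then $p_x(\lambda)=(a_2\lambda^2+a_0)\bigl(1+\tfrac{a_3}{a_2}\lambda\bigr)$, whose roots are $\pm{\rm i}\sqrt{a_0/a_2}$ and $-a_2/a_3$, all in the closed left half plane. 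This is the same argument as item 1 of Lemma \ref{LemQuasi}, via \cite[Theorem 4.9]{AGT}.
\end{itemize}

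\textbf{Main obstacle.} The only delicate point is the equality case of Step 3, where the roots lie on the imaginary axis and the coprimality argument fails. I would handle it by the explicit factorization above rather than by citation.
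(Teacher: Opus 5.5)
Your proposal is correct and matches the paper's proof: you reduce each of the five items to $a_1a_2\geq a_0a_3$ with all $a_k>0$, and then conclude as in Lemma \ref{LemQuasi}. The only difference is minor. You settle the strict case by Li\'enard--Chipart and the equality case by the explicit factorization $p_x(\lambda)=(a_2\lambda^2+a_0)(1+\tfrac{a_3}{a_2}\lambda)$, where the paper uses the even/odd-part root criterion it cites. Your version is valid and self-contained.
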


\begin{proof}
    It is easy to see that each of the 
    five conditions yields $a_2 a_1 \geq a_0 a_3$. The proof then continues as in Lemma \ref{LemQuasi}.
\end{proof}

In Theorem \ref{ThmStability}, we provide some sufficient conditions for the system \eqref{System} to be stable when $d=3$.

\begin{theorem}\label{ThmStability}
Let $d=3$. The system \eqref{System} is stable 
if $A_0\succeq 0 $, $A_1\succ 0 $, and (at least) one of the following statements is satisfied:
  \begin{enumerate}
 
 \item
$0 \neq A_2\succeq A_3\succeq 0$,  $\lambda_{\max}(A_1,A_0)\lambda_{\max}(A_2,A_3)\leq 1$.
\item $A_1\succeq A_0$, $A_2\succeq A_3\succeq 0$.
    \item $A_1\succeq A_0$, $A_2\succeq A_0$, $A_1\succeq A_3\succeq 0$.

\end{enumerate}
\end{theorem}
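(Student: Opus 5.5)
The plan is to split the statement into three parts: regularity, quasi-stability, and semisimplicity of any eigenvalues on ${\rm i}\mathbb R$. By Theorem \ref{bigtheorem}, stability of \eqref{System} is equivalent to stability of $P(\lambda)$, so these three parts suffice.

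\textbf{Regularity and quasi-stability.} Regularity is immediate from Lemma \ref{lem:defreg}, since $A_1\succ 0$. Quasi-stability should follow from the earlier results.
\begin{itemize}
\item Under item 1, it is Corollary \ref{CorQuasi}; the extra hypothesis $A_2\neq 0$ is not needed for this step.
\item Under item 2 or item 3, it is Lemma \ref{LemQuasi}, because $A_1\succ 0$ gives $A_1+A_2+A_3\succ 0$. Item 2 is Lemma \ref{LemQuasi}(1), and item 3 is Lemma \ref{LemQuasi}(2).
\end{itemize}
In every case, each $a_k$ from \eqref{def:ak} is nonnegative, $a_1>0$, and $a_1a_2\ge a_0a_3$. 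The last inequality comes from Proposition \ref{Pro2} in item 1 and directly from the matrix inequalities in items 2 and 3. Hence every $p_x$ is quasi-stable.

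\textbf{Locating imaginary eigenvalues.} Let $\mu={\rm i}\omega$ with $P(\mu)x=0$ and $x\neq 0$. Then $p_x({\rm i}\omega)=0$. Separating real and imaginary parts gives
\[ a_0-\omega^2a_2=0, \qquad \omega\,(a_1-\omega^2a_3)=0. \]
If $\omega=0$, then $a_0=0$, so $A_0x=0$ because $A_0\succeq 0$. If $\omega\neq 0$, then $a_1=\omega^2a_3$, which forces $a_3>0$, and also $a_0=\omega^2a_2$. I will write
\[ P({\rm i}\omega)=H+{\rm i}\omega K,\qquad H=A_0-\omega^2A_2,\quad K=A_1-\omega^2A_3, \]
with $H,K$ Hermitian and $x^*Hx=x^*Kx=0$.

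\textbf{Semisimplicity.} A Jordan chain of length two at $\mu$ is a vector $y$ with $P(\mu)y+P'(\mu)x=0$. Multiplying on the left by $x^*$ gives $x^*P(\mu)y+x^*P'(\mu)x=0$. Note that $x^*P'(\mu)x=p_x'(\mu)$.

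For $\mu=0$ the argument is clean. Here $x^*P(0)=(A_0x)^*=0$, so the relation forces $x^*A_1x=0$. This contradicts $A_1\succ 0$.

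For $\omega\neq 0$ the plan has two steps.
\begin{enumerate}
\item Show that $x^*P(\mu)=0$. Since $P(\mu)^*=P(\bar\mu)=H-{\rm i}\omega K$, this is equivalent to $Hx=Kx=0$.
\item Show that $p_x'({\rm i}\omega)\neq 0$, i.e.\ that ${\rm i}\omega$ is a simple root of $p_x$. This reduces to a scalar computation. Since $a_3>0$ and $a_1=\omega^2 a_3$, we get $p_x(\lambda)=(\lambda^2+\omega^2)(a_3\lambda+a_2)$. If $a_2>0$, the root ${\rm i}\omega$ is simple. The degenerate case $a_2=0$ forces $a_0=0$ and must be excluded by the hypotheses; this is presumably where $A_2\neq 0$ in item 1, or $A_2\succeq A_0$ together with $A_1\succ 0$, enters.
\end{enumerate}

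\textbf{Main obstacle.} I expect step 1, $Hx=Kx=0$, to be the hard part. From $P(\mu)x=0$ we only know $Hx=-{\rm i}\omega Kx$. My first attempt will be a variational argument. Quasi-stability holds for every $p_z$, so the function $z\mapsto a_1(z)a_2(z)-a_0(z)a_3(z)$ is nonnegative on $\mathbb C^n$. It vanishes at $z=x$, since $a_1a_2=\omega^2a_3\cdot a_0/\omega^2=a_0a_3$. Hence $x$ is a global minimizer, and its gradient vanishes there:
\[ a_2A_1x+a_1A_2x-a_3A_0x-a_0A_3x=0. \]
Substituting $a_1=\omega^2a_3$ and $a_0=\omega^2a_2$ rewrites this as a linear relation $a_2Kx=a_3Hx$. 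Combined with $Hx=-{\rm i}\omega Kx$, this gives $(a_2+{\rm i}\omega a_3)Kx=0$. Because $a_3>0$ and $\omega\neq 0$, the coefficient is nonzero, so $Kx=0$ and then $Hx=0$.

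If this works, it gives step 1 uniformly for all three items. The item-specific hypotheses would then be needed only to rule out $a_2=0$ in step 2.
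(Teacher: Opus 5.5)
Your argument is correct, and it obtains the key identity $x^*P({\rm i}\omega)=0$ by a different route from the paper's.

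\textbf{The paper's route.} The paper argues geometrically. Quasi-stability of every $p_x$ puts $W(P)$ in the closed left half-plane, so an imaginary eigenvalue ${\rm i}b$ lies on the boundary of $W(P)$. Then \cite[Theorem 1.1]{MP} gives that $0$ is a boundary point of the numerical range of the constant matrix $P({\rm i}b)$. Next, \cite[Theorem 1.6.6]{HJ} makes $P({\rm i}b)$ unitarily similar to $0\oplus B$ with $B$ nonsingular, and hence $v^*P({\rm i}b)=0$.

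\textbf{Your route.} You replace both citations with the first-order optimality condition for the globally nonnegative function $f(z)=a_1a_2-a_0a_3$ at the eigenvector. This is elementary and self-contained. It also yields stronger information, namely $A_0x=\omega^2A_2x$ and $A_1x=\omega^2A_3x$, so that $P(\lambda)x=(\lambda^2+\omega^2)(\lambda A_3+A_2)x$. The price is that it relies on a certificate specific to $d=3$: the single Hurwitz minor $a_1a_2-a_0a_3$, which must be nonnegative everywhere and vanish at the eigenvector. The paper's boundary argument gives $v^*P({\rm i}b)=0$ in any degree, using only that $W(P)$ lies in the closed left half-plane.

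\textbf{The simplicity step.} The paper shows $v^*P'({\rm i}b)v\neq 0$ with a $4\times 4$ linear system in $(a_0,a_2,a_1,a_3)$. That system comes from $p_v({\rm i}b)=p_v'({\rm i}b)=0$, is nonsingular for $b\neq 0$, and forces all $a_k=0$, contradicting $a_1>0$. Your factorization $p_x(\lambda)=(\lambda^2+\omega^2)(a_3\lambda+a_2)$ is an equivalent way to see this.

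However, your worry about the case $a_2=0$ is unfounded, and you should drop that case distinction. We have $p_x'({\rm i}\omega)=2{\rm i}\omega(a_2+{\rm i}\omega a_3)$, and $a_2+{\rm i}\omega a_3\neq 0$ because its imaginary part $\omega a_3$ is nonzero, whatever $a_2$ is. For $a_2=0$, $p_x=a_3\lambda(\lambda^2+\omega^2)$ still has ${\rm i}\omega$ as a simple root.

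Moreover, $a_2=0$ could not be excluded under item 3 anyway. For $n=1$, $P(\lambda)=\lambda^3+\lambda$ satisfies item 3 and has $a_2=0$ at the eigenvalue ${\rm i}$. So the item-specific hypotheses are used only for quasi-stability and for $f\ge 0$, not in the simplicity step. The hypothesis $A_2\neq 0$ in item 1 merely makes $\lambda_{\max}(A_2,A_3)$ well defined, and it is automatic since $A_2\succeq A_3\neq 0$.
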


\begin{proof}
Let $
P(\lambda)$ be given as in \eqref{Poly} and $a_k$ as in \eqref{def:ak}. In view of Lemma \ref{LemQuasi} and its proof, $
P(\lambda)$ and $p_x(\lambda)$ are both quasi-stable, and in view of Lemma \ref{lem:defreg}, $P(\lambda)$ is regular. If $\Lambda(P)\subseteq \mathbb C_-$, then system \eqref{System} is trivially asymptotically stable and hence stable, so it suffices to prove the statement assuming $\Lambda(P)\cap {\rm i}\mathbb R\neq \emptyset$. Suppose that ${\rm i}b\in \Lambda(P)\cap {\rm i}\mathbb R$.  In this case,
$$
{\rm Re}({\rm i}b)=0=\max\{{\rm Re}\ \lambda: \lambda\in W(P)\}, 
$$
which means that ${\rm i}b$ is a boundary point of $W(P)$. 

We now follow in part the proof of \cite[Theorem 1.1]{Wimmer}.
Let $0 \neq v \in \ker P({\rm i}b)$.
By \cite[Theorem 1.1]{MP}, $0$ is a boundary point of the numerical range of the constant matrix $P({\rm i}b)$. Then, it follows from \cite[Theorem 1.6.6]{HJ} that $P({\rm i}b)$ is unitarily similar to $0 \oplus B$, where $B$ is a nonsingular matrix. Then $v ^*P({\rm i}b)=0$. We now claim that
\begin{equation}\label{vP'v}
v^*P'({\rm i}b) v\neq 0.
\end{equation}
Then, there is no possible solution $w \in \mathbb{C}^n$ to the equation $P'({\rm i}b) v+P({\rm i}b) w=0$, implying that i$b$ is a semisimple eigenvalue of $P(\lambda)$ \cite{DN,GLRMP}. Thus, $P(\lambda)$ is stable and so is \eqref{System}.

It remains to prove \eqref{vP'v}. To this goal, specialize the definitions of $a_k$ in \eqref{def:ak} to $x=v$, so that $a_k : = v^* A_k v$. If $b=0$, \eqref{vP'v} is tantamount to $a_1 \neq 0$, which is true because $A_1 \succ 0$. If instead $b \neq 0$, suppose for a contradiction $v^*P'({\rm i}b) v=0$, and observe that $v^*P({\rm i}b) v=0$. Expanding and taking real and imaginary parts of the latter two equations, we obtain the linear system
\begin{equation*}
 \begin{bmatrix}
  1 & -b^2 & 0  & 0\\ 
   0 & 2b & 0 & 0\\
   0 & 0 & b & -b^3\\
  0 & 0 & 1 & -3 b^2\\
\end{bmatrix}\begin{bmatrix}
    a_0\\
    a_2\\
    a_1\\
    a_3
\end{bmatrix} = 0 \Rightarrow a_0 = a_1 = a_2 = a_3 = 0,
\end{equation*}
 which is a contradiction because, by the positive definiteness of $A_1$, $a_1 > 0$. 
\end{proof}

We now state Corollaries \ref{CorStable} and \ref{CorHurwitz}. Their proofs are very similar to that of Theorem \ref{ThmStability}, and for this reason we only sketch them.

\begin{corollary}\label{CorStable}
Let $d=3$. The system \eqref{System} is stable 
if $A_k\succ 0 $ for $k=0,1,2,3$, and (at least) one of the following statements is satisfied:
\begin{enumerate}
    \item
  $\lambda_{\max}(A_1, A_0)\lambda_{\max}(A_2, A_3)\leq 1.$

      \item
$\lambda_{\max}(A_1, A_3)\lambda_{\max}(A_2, A_0)\leq 1.$
      \item
$\lambda_{\max}(A_1, A_0+A_3)\lambda_{\max}(A_2, A_0+A_3)\leq 4.$
\item 
 $A_1\succeq A_3$,  $A_2\succeq A_0 $.

\item $2 A_1\succeq A_0 +A_3$, $2 A_2\succeq A_0 +A_3$.
\end{enumerate}
\end{corollary}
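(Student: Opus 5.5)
The plan is to follow the proof of Theorem \ref{ThmStability} almost verbatim, with Corollary \ref{CorQuasi2} replacing Lemma \ref{LemQuasi} as the source of quasi-stability. Since all coefficients are positive definite, Lemma \ref{lem:defreg} gives regularity of $P(\lambda)$. Each of the five conditions yields $a_1a_2 \geq a_0a_3$ for every $x\neq 0$. For items 1--3 this comes from Proposition \ref{Pro2}, which applies because all kernels are trivial: in item 3 we have $(a_0+a_3)^2 \geq 4a_0a_3$, so $4a_0a_3 \leq (a_0+a_3)^2 \leq 4a_1a_2$. Items 4 and 5 give the inequality directly, using AM--GM in item 5. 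Hence, by Corollary \ref{CorQuasi2}, both $P(\lambda)$ and every $p_x(\lambda)$ are quasi-stable. In particular $\max\{{\rm Re}\,\lambda:\lambda\in W(P)\}\le 0$.

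If $\Lambda(P)\subseteq\Cminus$, the conclusion follows from Theorem \ref{bigtheorem}. Otherwise, take ${\rm i}b\in\Lambda(P)\cap{\rm i}\mathbb R$ and $0\neq v\in\ker P({\rm i}b)$. Exactly as in Theorem \ref{ThmStability}, ${\rm i}b$ lies on the boundary of $W(P)$, so by \cite[Theorem 1.1]{MP} the point $0$ lies on the boundary of the numerical range of $P({\rm i}b)$. By \cite[Theorem 1.6.6]{HJ}, $P({\rm i}b)$ is unitarily similar to $0\oplus B$ with $B$ nonsingular, and therefore $v^*P({\rm i}b)=0$. Once we establish $v^*P'({\rm i}b)v\neq 0$, the equation $P'({\rm i}b)v+P({\rm i}b)w=0$ has no solution $w$. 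Hence ${\rm i}b$ is semisimple, $P(\lambda)$ is stable, and so is \eqref{System} by Theorem \ref{bigtheorem}.

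The key step is $v^*P'({\rm i}b)v\neq 0$. If $b=0$, this reads $a_1\neq 0$, which holds since $A_1\succ 0$. If $b\neq 0$, assume $v^*P'({\rm i}b)v=0$. Taking real and imaginary parts of this equation together with $v^*P({\rm i}b)v=0$ gives the same invertible $4\times 4$ linear system as before, which forces $a_0=a_1=a_2=a_3=0$. This contradicts $a_1>0$ (indeed all $a_k>0$ here).

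I do not expect a real obstacle. The only point requiring care is confirming that each of the five items genuinely gives $a_1a_2\ge a_0a_3$, so that the quasi-stability input (Corollary \ref{CorQuasi2}) applies. After that, the semisimplicity argument is independent of which condition was used.
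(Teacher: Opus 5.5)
Your proposal is correct and follows essentially the same route as the paper. Quasi-stability comes from Corollary~\ref{CorQuasi2}, and semisimplicity of any eigenvalue ${\rm i}b$ follows from $v^*P'({\rm i}b)v\neq 0$ exactly as in Theorem~\ref{ThmStability}. The only difference is cosmetic: the paper reaches the contradiction faster from the imaginary part $2a_2b=0$, since $A_2\succ 0$ then forces $b=0$, which $A_0\succ 0$ rules out. You instead reuse the full $4\times 4$ system (determinant $-4b^4\neq 0$) together with $A_1\succ 0$, and both arguments are valid.
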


\begin{proof}
  By Corollary \ref{CorQuasi2}, $P(\lambda)$ is quasi-stable, so it suffices to show that every pure imaginary eigenvalue of $P(\lambda)$ (if any) is semisimple. To this goal, suppose there exists $b \in \mathbb{R}$ such that ${\rm i}b \in \Lambda(P)$, and let $0 \neq v \in \ker P({\rm i}b)$. Setting $a_k := v^* A_k v > 0$, and following the proof of Theorem \ref{ThmStability}, assume for a contradiction that $v^* P'({\rm i}b) v =0$, implying $a_2 b = 0$. Then, either $a_2=0$, contradicting $A_2 \succ 0$, or $b=0$, contradicting $A_0 \succ 0$.
\end{proof}

\begin{corollary}\label{CorHurwitz}
Let $d=2$. The system \eqref{System} is stable 
if $A_k\succeq 0 $ for $k=0,1,2$, $A_0+A_1\succ 0 $.
\end{corollary}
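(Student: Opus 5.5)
We follow the structure of the proof of Theorem \ref{ThmStability}: first quasi-stability and regularity, then semisimplicity of every pure imaginary eigenvalue. Regularity comes from Lemma \ref{lem:defreg} applied with $S=\{0,1\}$, since $A_0+A_1\succ 0$.

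For quasi-stability, fix $x\neq 0$ and set $a_k$ as in \eqref{def:ak}. Then $a_0,a_1,a_2\geq 0$ and $a_0+a_1>0$, so $p_x(\lambda)=a_2\lambda^2+a_1\lambda+a_0$ is not identically zero. We will check case by case that every root of $p_x$ lies in the closed left half plane.
\begin{itemize}
\item If $a_2>0$, the roots have sum $-a_1/a_2\leq 0$ and product $a_0/a_2\geq 0$. Either both roots are real and nonpositive, or they are complex conjugate with real part $-a_1/(2a_2)\leq 0$.
\item If $a_2=0$ and $a_1>0$, the unique root is $-a_0/a_1\leq 0$.
\item If $a_2=a_1=0$, then $p_x\equiv a_0>0$ has no roots.
\end{itemize}
Since every finite eigenvalue $\mu$ of $P(\lambda)$ admits $x\neq 0$ with $p_x(\mu)=0$, we get $\Lambda(P)\subseteq\{\mathrm{Re}\,\lambda\leq 0\}$. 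Note also that $\max\{\mathrm{Re}\,\lambda:\lambda\in W(P)\}\leq 0$.

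For semisimplicity, let ${\rm i}b\in\Lambda(P)$ and $0\neq v\in\ker P({\rm i}b)$. Exactly as in the proof of Theorem \ref{ThmStability}, ${\rm i}b$ is a boundary point of $W(P)$, so $0$ lies on the boundary of the numerical range of $P({\rm i}b)$. Hence, by \cite[Theorem 1.1]{MP} and \cite[Theorem 1.6.6]{HJ}, $v^*P({\rm i}b)=0$. It then suffices to prove $v^*P'({\rm i}b)v\neq 0$, because this rules out a Jordan chain of length two.

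Set $a_k=v^*A_kv$. The conditions $v^*P({\rm i}b)v=0$ and $v^*P'({\rm i}b)v=0$ read
\[
a_0-b^2a_2=0,\qquad ba_1=0,\qquad a_1+2{\rm i}ba_2=0.
\]
The third equation splits into real part $a_1=0$ and imaginary part $ba_2=0$. We now split on $b$.
\begin{itemize}
\item If $b\neq 0$, then $a_2=0$, and hence $a_0=0$.
\item If $b=0$, the first equation gives $a_0=0$ directly.
\end{itemize}
In both cases $a_0=a_1=0$, which contradicts $v^*(A_0+A_1)v>0$. Therefore $v^*P'({\rm i}b)v\neq 0$, every pure imaginary eigenvalue is semisimple, and Theorem \ref{bigtheorem} gives stability of \eqref{System}.

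The main subtlety is the boundary-point argument giving $v^*P({\rm i}b)=0$, which is needed to convert the scalar nondegeneracy $v^*P'({\rm i}b)v\neq 0$ into semisimplicity. It is reused verbatim from the proof of Theorem \ref{ThmStability}, and it applies here because quasi-stability of $W(P)$ has already been established.
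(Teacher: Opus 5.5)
Your proposal is correct and follows the paper's proof essentially step by step: regularity from Lemma \ref{lem:defreg}, quasi-stability of each $p_x$, and then the boundary-point argument of Theorem \ref{ThmStability}, which reduces everything to showing $v^*P'({\rm i}b)v\neq 0$. The differences are cosmetic. You spell out the quasi-stability case analysis that the paper leaves as an exercise, and you derive the contradiction $a_0=a_1=0$ from assuming $v^*P'({\rm i}b)v=0$, whereas the paper computes $v^*P'({\rm i}b)v$ directly and shows it is nonzero in the two cases $b=0$ and $b\neq 0$.
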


\begin{proof}
    By Lemma \ref{lem:defreg}, $P(\lambda)$ is regular. Let $a_k$ be as in \eqref{def:ak}. It is an easy exercise to check that $a_k \geq 0$ imply that $p_x(\lambda)=a_2 \lambda^2 + a_1 \lambda +a_0$ is quasi-stable.

    Suppose now that there exists $b \in \mathbb{R}$ such that ${\rm i}b \in \Lambda(P)$ and let $0 \neq v \in \ker P({\rm i}b)$. We claim that $v^* P'({\rm i}b) v \neq 0$ and conclude as in Theorem \ref{ThmStability}. To prove the claim, specialize the definition of $a_k$ in \eqref{def:ak} to $x=v$, i.e., $a_k := v^* A_k v$. Note that $\Im (v^* P({\rm i}b) v) = a_1 b = 0$. If $b=0$, then $a_0=0$. Therefore $a_1 = a_0 + a_1 > 0$, and hence $v^* P'(0) v = a_1 \neq 0$. If $b \neq 0$, then $a_1 = 0$ and therefore $a_0 = a_0 + a_1 > 0$. Moreover, $\Re (v^* P({\rm i}b) v) = a_0 - b^2 a_2 = 0$, implying $a_2 = a_0 b^{-2} > 0$ and in turn that $v^* P'({\rm i}b) v = 2 {\rm i} a_2 b \neq 0$.
\end{proof}

\section{Sufficient conditions for Hurwitz stability when $d \geq 4$}\label{sec:d4}

Some of the techniques that we used in Section \ref{sec:d3} extend to higher degrees, leading to similar results. In the present section, we explain how and why. Some of the arguments needed follow quite closely those that we used for $d \leq 3$. For this reason, and for the sake of conciseness, we occasionally only sketch certain steps. We also limit ourselves to studying Hurwitz stability of \eqref{Poly}. Obtaining conditions for the stability of \eqref{System}, except for those strong enough to guarantee asymptotic stability, is a difficult problem for high degrees because, given $b \in \mathbb{R}$ and $0\neq v \in \ker P({\rm i}b)$, the higher the degree the more involved the algebraic condition $v^* P'({\rm i} b) v = 0$.

\begin{theorem}\label{ThmHurwitz4}
The system \eqref{System}  with $d=4$ is asymptotically stable 
if $A_k \succ 0$ for all $k=0,\dots,3$, $A_3 \succeq A_4 \succeq 0$, and (at least) one of the following statements is satisfied:
\begin{enumerate}
    \item 
$
  \lambda_{\max}(A_1, A_0)\lambda_{\max}(A_2, A_3)+\lambda_{\max}(A_2,A_1)\lambda_{\max}(A_3,A_4)<1.
$
   \item
$
  \lambda_{\max}(A_1, A_3)\lambda_{\max}(A_2, A_0)+\lambda_{\max}(A_2,A_1)\lambda_{\max}(A_3,A_4)<1.
$
\end{enumerate}
\end{theorem}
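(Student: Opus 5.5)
We follow the scheme of Theorem \ref{ThmHurwitz3} and Lemma \ref{LemHurwitz3}. Regularity of $P(\lambda)$ follows from Lemma \ref{lem:defreg}, since $A_0 \succ 0$. It then suffices to verify condition \eqref{px}: for every $x \neq 0$, with $a_k$ as in \eqref{def:ak}, the real quartic $p_x(\lambda)=a_4\lambda^4+a_3\lambda^3+a_2\lambda^2+a_1\lambda+a_0$ must be Hurwitz stable. By assumption $a_0,a_1,a_2,a_3>0$ and $a_3\ge a_4\ge 0$. If $a_4=0$, then $p_x$ is a cubic with positive coefficients, and it is Hurwitz stable as soon as $a_1a_2>a_0a_3$. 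This inequality is implied by the argument below (drop the $a_4$ term). So the main case is $a_4>0$.

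For a quartic with positive coefficients, the Li\'enard--Chipart criterion \cite[p. 221, Theorem 11]{Gan} reduces Hurwitz stability to positivity of the coefficients plus the single Hurwitz determinant condition
\[
\Delta_3 = a_3(a_2a_1-a_3a_0) - a_1^2a_4 > 0 ,
\]
equivalently $a_1a_2a_3 > a_0a_3^2 + a_1^2a_4$. Dividing by $a_1a_2a_3>0$, this becomes
\[
\frac{a_0}{a_1}\cdot\frac{a_3}{a_2} + \frac{a_1}{a_2}\cdot\frac{a_4}{a_3} < 1 .
\]
This is exactly where the two products in item 1 come from. We bound each ratio by Proposition \ref{Pro2}.

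\begin{itemize}
\item $a_0/a_1\le\lambda_{\max}(A_1,A_0)$, since $\ker A_1=\{0\}$.
\item $a_3/a_2\le\lambda_{\max}(A_2,A_3)$, since $\ker A_2=\{0\}$ and $a_3>0$.
\item $a_1/a_2\le\lambda_{\max}(A_2,A_1)$.
\item $a_4/a_3\le\lambda_{\max}(A_3,A_4)$. This one is valid because $A_3\succeq A_4$ gives $\ker A_3\subseteq\ker A_4$, and because $a_4>0$ in the case under consideration.
\end{itemize}

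Summing the two products of bounds gives the displayed inequality, hence $\Delta_3>0$.

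For item 2, rewrite the first term symmetrically as $\frac{a_3}{a_1}\cdot\frac{a_0}{a_2}$. Then Proposition \ref{Pro2} gives $a_3/a_1\le\lambda_{\max}(A_1,A_3)$ and $a_0/a_2\le\lambda_{\max}(A_2,A_0)$, while the second term is bounded as before. This mirrors item 1 of Corollary \ref{Cor36}.

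The main obstacle is pinning down the correct Li\'enard--Chipart condition for degree four: which Hurwitz minors must be checked, given that positivity of $a_0,\dots,a_4$ is available, and the algebra leading to $\Delta_3$. We would check it against the $4\times4$ Hurwitz matrix directly. We also have to handle the degenerate case $a_4=0$, where the criterion reduces to the cubic condition $a_1a_2>a_0a_3$. That condition follows because the first product is $<1$ on its own, the second being nonnegative. Once \eqref{px} holds, $P(\lambda)$ is Hurwitz stable, and Theorem \ref{bigtheorem} gives asymptotic stability of \eqref{System}.
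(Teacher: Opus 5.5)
Your proposal is correct and follows essentially the same route as the paper. You bound the ratios $a_0/a_1$, $a_3/a_2$, $a_1/a_2$, $a_4/a_3$ via Proposition~\ref{Pro2}, and for item 2 you use $a_3/a_1$ and $a_0/a_2$ instead of the first two. This yields $a_1a_2a_3 > a_0a_3^2 + a_1^2a_4$, and you conclude with the Li\'enard--Chipart criterion for quartics. Your explicit handling of the degenerate case $a_4=0$ (where $p_x$ is cubic and only $a_1a_2>a_0a_3$ is needed) is a small but welcome addition, since the paper invokes the quartic criterion ``under the condition $a_k>0$'' without separately addressing $a_4=0$.
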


\begin{proof}
By Lemma \ref{lem:defreg}, $P(\lambda)$ is regular. Let $a_k$ be as in \eqref{def:ak}. By assumption, $a_0, a_1, a_2, a_3 > 0$ and $a_3 \geq a_4 \geq 0$.

By repeated applications of Proposition \ref{Pro2} (observe that $\ker A_1 = \ker A_2 = \{0\}$ and that $A_3 \succeq A_4 \succeq 0$ implies $\ker A_3 \subseteq \ker A_4$) we get 
\[ \frac{a_0}{a_1} \leq \lambda_{\max}(A_1,A_0), \ \frac{a_3}{a_2} \leq \lambda_{\max}(A_2,A_3),  \ \frac{a_1}{a_2} \leq \lambda_{\max}(A_2,A_1)   \ \mathrm{and} \ \frac{a_4}{a_3} \leq \lambda_{\max}(A_3,A_4).\] Multiplying these bounds,
\[ \frac{a_0 a_3}{a_1 a_2} \leq \lambda_{\max}(A_1, A_0)\lambda_{\max}(A_2, A_3), \quad \frac{a_1 a_4}{a_2 a_3} \leq \lambda_{\max}(A_2,A_1)\lambda_{\max}(A_3,A_4). \]
Then, it is clear that 
either of the two enumerated conditions 
ensures that $\displaystyle \frac{a_0 a_3}{a_1 a_2} + \frac{a_1 a_4}{a_2 a_3} < 1$. Clearing denominators, this is in turn equivalent to $a_1 a_2 a_3 - a_0 a_3^2 - a_1^2 a_4 > 0$. Under the condition $a_k > 0$, this is equivalent to the Li\'{e}nard-Chipart criterion \cite[p. 221]{Gan} for quartic polynomials.
\end{proof}

\begin{theorem}\label{ThmHurwitzVanni}
Let $z_0 \approx 2.1479$ be the unique positive solution to the polynomial equation $z^3 - z^2 - 2z - 1=0$. The system \eqref{System} is asymptotically stable if $A_k \succ 0$ for all $k=0,\dots,d$,
 and (at least) one of the following statements is satisfied:
\begin{enumerate}
    \item $\displaystyle \lambda_{\max}(A_k, A_{k-1})\lambda_{\max}(A_{k+1}, A_{k+2}) \leq \frac{1}{z_0}$ for all $k=1,\ldots,d-2$.
    \item 
$\displaystyle
 \sum_{k=1}^{d-2} \lambda_{\max}(A_k, A_{k-1})\lambda_{\max}(A_{k+1}, A_{k+2})<1.
$
\end{enumerate}
\end{theorem}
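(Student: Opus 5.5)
The plan is the same scalarization used in Theorems \ref{ThmHurwitz3} and \ref{ThmHurwitz4}. We reduce condition \eqref{px} to a known sufficient condition for Hurwitz stability of real polynomials with positive coefficients, stated in terms of the ratios $a_{k-1}a_{k+2}/(a_k a_{k+1})$.

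First, since some $A_j \succ 0$, Lemma \ref{lem:defreg} gives that $P(\lambda)$ is regular. Fix $x\neq 0$ and define $a_k$ as in \eqref{def:ak}. Then $a_k>0$ for all $k=0,\dots,d$, so $p_x(\lambda)$ has exact degree $d$ and strictly positive coefficients. Every kernel is trivial, so Proposition \ref{Pro2} applies to each pencil involved and gives
\[ \frac{a_{k-1}}{a_k}\le \lambda_{\max}(A_k,A_{k-1}), \qquad \frac{a_{k+2}}{a_{k+1}}\le \lambda_{\max}(A_{k+1},A_{k+2}). \]
Multiplying these, the quantities $r_k := \dfrac{a_{k-1}a_{k+2}}{a_k a_{k+1}}$, for $k=1,\dots,d-2$, satisfy $r_k \le \lambda_{\max}(A_k,A_{k-1})\lambda_{\max}(A_{k+1},A_{k+2})$. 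Consequently, item 1 yields $r_k\le 1/z_0$ for every $k$, and item 2 yields $\sum_{k=1}^{d-2} r_k<1$. Both bounds hold uniformly in $x$.

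It then suffices to show that a real polynomial with positive coefficients satisfying either scalar condition is Hurwitz stable. Then \eqref{px} holds, so $\Lambda(P)\subseteq W(P)\subseteq\Cminus$, and Theorem \ref{bigtheorem} concludes.

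For item 1, I would invoke the Katkova--Vishnyakova theorem \cite{KaV}. It states that if all $a_k>0$ and $a_{k-1}a_{k+2}\le \beta\, a_k a_{k+1}$ for all $k$, with $\beta = 1/z_0\approx 0.4655$, then the polynomial is Hurwitz stable. The constant in the statement is visibly chosen to match this theorem. One should check that $1/z_0$ is exactly the root of the cubic appearing there; substituting $z=1/\beta$ into $z^3-z^2-2z-1=0$ gives $\beta^3+2\beta^2+\beta-1=0$, and I would match this against the normalization used in \cite{KaV}.

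For item 2 I expect the main obstacle, since I do not know a ready citation. I would first look for the condition $\sum_k r_k<1$ in the literature on sufficient stability conditions. Failing that, I would prove it by induction on $d$ via one step of the Routh algorithm. Normalize $a_d=1$ and pass to $q(\lambda)$ obtained by removing the leading term, with new coefficients $b_{k}=a_k - \dfrac{a_{k+1}}{a_{k+2}}\cdot a_{k-1}$ (acting on alternate coefficients). The aim is then to show two things. First, the $b_k$ remain positive; this follows because each correction is bounded by $r_k a_k<a_k$. Second, the new ratio sum is bounded by the old one minus the removed term, so that the hypothesis propagates.

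The base case is $d=3$, where the condition reduces to $r_1<1$, i.e. the Li\'enard--Chipart inequality $a_1a_2>a_0a_3$ used in Theorem \ref{ThmHurwitz3}. The case $d=4$ is Theorem \ref{ThmHurwitz4}, where $r_1+r_2<1$ is exactly the quartic criterion. Controlling how the $r_k$ change under the Routh step, so that their sum does not increase, is the delicate computation I would expect to spend most effort on.
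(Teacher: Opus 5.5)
You handle item 1 exactly as the paper does: Proposition \ref{Pro2} gives $r_k\le\lambda_{\max}(A_k,A_{k-1})\lambda_{\max}(A_{k+1},A_{k+2})$, and then the Katkova--Vishnyakova criterion \cite{KaV} applies. For item 2 the paper simply cites Kleptsyn's criterion \cite{Kl} (see also \cite[Section 1]{AGT}). That criterion says precisely that a real polynomial with positive coefficients and $S:=\sum_{k=1}^{d-2}r_k<1$ is Hurwitz stable.

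Your fallback induction, which replaces that citation, does not work as written. First, the Routh step uses the fixed multiplier $c=a_d/a_{d-1}$. One sets $q(\lambda)=p(\lambda)-c\lambda g(\lambda)$, where $g$ collects the terms of $p$ of degree $\equiv d-1 \pmod 2$. Hence $b_k=a_k-c\,a_{k-1}$ for $k\equiv d\pmod 2$ and $b_k=a_k$ otherwise; your formula $b_k=a_k-(a_{k+1}/a_{k+2})a_{k-1}$ is not this. Second, and more seriously, the invariant you want to propagate is false already for $d=4$. There $q(\lambda)=a_3\lambda^3+a_2(1-r_2)\lambda^2+a_1\lambda+a_0$, and its single ratio is $s_1=r_1/(1-r_2)$. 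This is strictly larger than $r_1=S-r_2$, and it also exceeds $r_2$ whenever $r_1>r_2(1-r_2)$. So the hypothesis does not propagate in the form you state.

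The induction can be repaired by propagating a normalized bound instead. Put $\varepsilon_k:=r_kr_{k+2}\cdots r_{d-2}$ for $1\le k\le d-2$ with $k\equiv d\pmod 2$, and $\varepsilon_k:=0$ for every other $0\le k\le d-1$. A telescoping computation gives $b_k=a_k(1-\varepsilon_k)$. Since $S<1$ forces every $r_k<1$, you get $0\le\varepsilon_k\le r_{d-2}<1$, so your positivity claim for the $b_k$ survives. The ratios of $q$ are $s_k=r_k(1-\varepsilon_{k-1})(1-\varepsilon_{k+2})/\bigl((1-\varepsilon_k)(1-\varepsilon_{k+1})\bigr)$ for $k=1,\dots,d-3$. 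Exactly one of $k,k+1$ has the parity of $d$, so at most one of $\varepsilon_k,\varepsilon_{k+1}$ is nonzero, which gives $s_k\le r_k/(1-r_{d-2})$. Summing yields $\sum_{k=1}^{d-3}s_k\le (S-r_{d-2})/(1-r_{d-2})$, and this is $<1$ exactly when $S<1$. Combine this with the classical fact that, for $a_d,a_{d-1}>0$, $p$ is Hurwitz stable if and only if $q$ is. The new leading coefficients $b_{d-1}=a_{d-1}$ and $b_{d-2}=a_{d-2}(1-r_{d-2})$ are again positive, so the step can be iterated down to your base case $d\le 3$. This gives a self-contained proof of Kleptsyn's criterion and hence of item 2.
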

\begin{proof}
By Lemma \ref{lem:defreg}, $P(\lambda)$ is regular. Let $a_k$ be as in \eqref{def:ak}. By assumption, $a_k > 0$ for $k=0,\dots,d$. Moreover, by repeated applications of Proposition \ref{Pro2}, we have $\displaystyle \frac{a_{k-1}}{a_k} \leq \lambda_{\max}(A_k, A_{k-1})$ and $\displaystyle \frac{a_{k+2}}{a_{k+1}} \leq \lambda_{\max}(A_{k+1}, A_{k+2})$ for all $k=1,\dots,d-2$. (Note that all the coefficients $A_k$ are invertible, and thus the null spaces inclusion, needed to apply Proposition \ref{Pro2}, holds trivially.) 
\begin{enumerate}
\item For every internal index $k=1, \dots, d-2$, it holds
\[ \frac{a_{k-1} a_{k+2}}{a_k a_{k+1}} \leq \lambda_{\max}(A_k, A_{k-1})\lambda_{\max}(A_{k+1}, A_{k+2}) \leq \frac{1}{z_0}. \]
This implies $a_k a_{k+1} \geq z_0 a_{k-1} a_{k+2}$ for all $k=1,\ldots,d-2$. By the Katkova-Vishnyakova criterion,  proved in \cite[Theorem 1]{KaV}, this in turn guarantees that the real polynomial $p_x(\lambda) = \sum_{k=0}^d a_k \lambda^k$ is Hurwitz stable. Thus, $P(\lambda)$ is Hurwitz stable and the system \eqref{System} is asymptotically stable. 
\item By Lemma \ref{lem:defreg}, $P(\lambda)$ is regular. Let $a_k$ be as in \eqref{def:ak}. By assumption, $a_k> 0$ for $k=0,\ldots,d$. 
By repeated applications of Proposition \ref{Pro2}, we have  
$\displaystyle 0<\frac{a_{k-1}}{a_k} \leq \lambda_{\max}(A_k, A_{k-1})$ and $\displaystyle 0<\frac{a_{k+2}}{a_{k+1}} \leq \lambda_{\max}(A_{k+1}, A_{k+2})$ for all $k=1,\ldots,d-2$. Hence,
\[ \sum_{k=1}^{d-2} \frac{a_{k-1} a_{k+2}}{a_k a_{k+1}}    \leq \sum_{k=1}^{d-2} \lambda_{\max}(A_k, A_{k-1})\lambda_{\max}(A_{k+1}, A_{k+2})<1.  \]
Therefore, by Kleptsyn's criterion \cite[Теорема]{Kl}, $p_x(\lambda)$ is Hurwitz stable; see also \cite[Section 1]{AGT} for a source in English.  Thus, $P(\lambda)$ is Hurwitz stable and the system \eqref{System} is asymptotically stable.
\end{enumerate}
\end{proof}

\section{Computational aspects}\label{sec:comasp}

In the present section, we discuss how to convert the theoretical analysis of Sections \ref{sec:d3} and \ref{sec:d4} into numerical algorithms implementable in floating point arithmetic, having in mind potentially large-scale instances of the system \eqref{System}. To motivate the design of efficient algorithms, we note that the value of the size $n$ of the matrices $A_k$ in \eqref{System} may be very large in practical applications \cite{KM,TM}. The sufficient conditions that we derived in the previous sections rely on two core numerical linear algebraic tasks: (1) Assessing the positive definiteness of Hermitian matrices; (2) Computing the largest finite eigenvalue of positive, and hence definite\footnote{Recall that a Hermitian pencil $\lambda A + B$ is definite if $(x^* A x)^2 + (x^* B x)^2 > 0$ for all $x \neq 0.$}, pencils.

We observe in passing that, although some of our theoretical conditions require only positive semidefiniteness, this property is numerically undecidable for a generic Hermitian matrix. There are of course exceptions, of which an obvious one is a matrix of the form $A = B \oplus 0$, for which verifying $B \succ 0$ provides a test for $A \succeq 0$. However, excluding such special cases, the standard practical approach is to test for strict positive definiteness. Thus, to avoid complicating the computational discussion with overly subtle special cases, both in the present section and in Section \ref{sec:numexp} we henceforth focus on positive definite coefficients.

Both tasks 1 and 2 above can be implemented using highly efficient dense numerical linear algebra algorithms \cite{ASNA, GV}:
\begin{enumerate}
    \item The positive definiteness of $M = M^* \in \mathbb{C}^{n \times n}$ is verified by attempting a Cholesky decomposition $M = LL^*$ for a lower triangular $L \in \mathbb{C}^{n \times n}$, requiring only $\approx \frac{1}{3}n^3$ flops. The test is successful if the algorithm terminates. Of course, when executed in floating point arithmetic, the algorithm can be affected by approximation errors. Thus, in a sense, one can state a ``numerical version" of the sufficient conditions for stability. To this goal, let $\lambda$ be the smallest eigenvalue of the matrix $H=\diag(M)^{-1/2} M \diag(M)^{-1/2}$ where $\diag(M)$ is a diagonal matrix coinciding with $M$ on the diagonal elements. Then, a result due to Demmel \cite[Theorem 10.7]{ASNA}
    states that the Cholesky algorithm is guaranteed to succeed if $\lambda > c_n$ and guaranteed to fail if $\lambda \leq - c_n$. Here, $c_n \approx n^2$\textbf{u} as long as $n$\textbf{u} $\ll 1$, where $n$ is the size of $M$ and \textbf{u} is the unit roundoff. To get a rough idea of the concrete sensitivity for large-scale systems, note that double precision floating point arithmetic corresponds to \textbf{u}$=2^{-53}$, and thus $c_n \approx 1.1 \cdot 10^{-10}$ for $n=10^3$ and $c_n \approx 1.1 \cdot 10^{-4}$ for $n=10^6$. The Cholesky factor itself can provide an a posteriori estimate of the test's reliability, e.g., by checking that $\min_i L_{ii}/\max_i L_{ii}$ is not too small compared to \textbf{u}. 
    \item The largest eigenvalue of the definite pencil $A\lambda - B$ can be computed in two steps. First, we compute the Cholesky factorization $A = LL^*$ and then we extract the largest eigenvalue of the Hermitian matrix $L^{-1} B L^{-*}$. The latter step can be performed very efficiently, for example, by means of the Lanczos algorithm \cite{GV} or the power method.
\end{enumerate}

\subsection{Comparison against previous methods}

Evaluating the sufficient conditions proposed in Theorem \ref{ThmHurwitzVanni} requires an at most $d+1$ Cholesky decompositions of $n \times n$ Hermitian matrices and the computation of the largest eigenvalue of $2d-4$ definite pencils; this results in an overall $O(dn^3)$ complexity. The other novel sufficient conditions, applicable to small degrees, also scale linearly in $d$ and cubically in $n$. This provides a clear computational advantage with respect to existing methods.

Let us briefly review why. The companion pencil approach scales as $O(d^2n^3)$ if an ad hoc structured algorithm is employed \cite{AMRVW}. In practice, for moderate degrees we expect the speedup to far exceed the asymptotic factor $d$, due to favorable constants. For instance, the companion approach computes all $dn$ eigenvalues of a non-Hermitian generalized eigenvalue problem, whereas the proposed method isolates a single dominant eigenvalue of a definite pencil, which can be done far more efficiently. Alternatively, if we compare with the approach of Lerer and Tismenetsky \cite[Section 2]{LT82}, their first step requires solving the equation $Q^*(-\lambda)Q(\lambda)=P^*(-\lambda)P(\lambda)$ for a matrix polynomial $Q(\lambda)$ coprime with $P(\lambda)$ \cite{GLRMP}. It is unclear, in general, how to solve this equation other than by first computing the eigenvalues and eigenvectors of $P(\lambda)$. Even assuming that a more direct technique is found, a second step then requires the Cholesky factorization of a $dn \times dn$ matrix, for an overall $O(d^3 n^3)$ complexity. A third approach is via the Lyapunov criterion described in Section \ref{sec:intro}. Directly tackling  the linear matrix inequalities is possible \cite{LMI} but computationally inefficient. Assuming for simplicity that $L_1$ (and hence $A_d$) is invertible, via the change of variables $X=L_1^* Y L_1, A=L_1^{-1}L_0$ we can equivalently solve the Lyapunov equation $A^* X + X A = I$ for the Hermitian unknown $X$ and test whether $X \succeq 0$. (It can be proved \cite{Stykel} that a positive semidefinite solution exists for right hand side $I$ if and only if it exists for every positive definite right hand side $Q \succ 0$.)  This method requires solving a matrix equation of size $dn \times dn$, plus a Cholesky decomposition, and it therefore also scales as $O(d^3n^3)$.

\subsection{Comparison between different new methods}

In many cases, we expect a negligible performance gap between the methods solely based on Cholesky decompositions and those also requiring the computation of one eigenvalue of definite pencils. This is because, if the power method on the matrix $L^{-1} B L^{-*}$ is employed, then each step requires solving two triangular systems and computing one matrix-vector multiplication, for an overall $O(n^2)$ cost per iteration. If the spectral gap between the dominant and the next eigenvalue is not too small, this extra cost is dominated by the $O(n^3)$ operations to compute the Cholesky factors. If instead the spectral gap is small (this happens for example, with high probability for large $n$, for the randomly generated matrices in Section \ref{sec:numexp}, whose eigenvalues are Marchenko-Pastur distributed), a computational advantage can be expected for the conditions that do not require eigenvalue computations. However, a good reason to nevertheless also consider the pencil-based criteria is that they are strictly weaker than those solely based on positive definiteness, and therefore applicable to a strictly larger set of systems \eqref{System}. We illustrate this fact with Proposition \ref{prop:implication}.

\begin{proposition}\label{prop:implication}
\begin{enumerate}
    \item In Theorem \ref{ThmHurwitz3}, items 3 and (provided $A_2 \neq 0$) 2 imply item 1. The converse implications do not hold.
    \item In Corollary \ref{Cor36}, item 3 implies item 1, and item 4 implies item 2. The converse implications do not hold.
\end{enumerate}

\end{proposition}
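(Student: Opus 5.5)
The common thread is that a Löwner inequality $A \succeq B$ (with $A \succ 0$) translates into a bound on $\lambda_{\max}(A,B)$. By Proposition \ref{ProPencil}, $A\lambda - B$ is then congruent to $\bigoplus(\lambda-\mu_i)\oplus 0$. If $A \succ 0$, then $\lambda_{\max}(A,B)$ is the largest eigenvalue of $L^{-1}BL^{-*}$, where $A = LL^*$. Hence $A \succeq B$ gives $\lambda_{\max}(A,B) \le 1$, and $A \succ B$ gives $\lambda_{\max}(A,B) < 1$.

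For a semidefinite leading coefficient, as in $\lambda_{\max}(A_2,A_3)$ with $A_2 \succeq A_3 \succeq 0$ and $A_2 \neq 0$, I argue via the canonical form of Proposition \ref{Pro2}. Since $\ker A_2 \subseteq \ker A_3$, the pencil is congruent to $\bigoplus_{i=1}^s(\lambda-\mu_i)\oplus 0$. Testing $A_2 - A_3 \succeq 0$ on the congruence basis gives $1 - \mu_i \ge 0$, so $\lambda_{\max}(A_2,A_3) \le 1$, and strict inequality $A_2 \succ A_3$ gives $\mu_i < 1$.

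\emph{Forward implications.} In Theorem \ref{ThmHurwitz3}, item 3 gives $\lambda_{\max}(A_1,A_0)\le 1$ and $\lambda_{\max}(A_2,A_3)<1$. Here $A_2 \succ A_3 \succeq 0$ forces $A_2 \succ 0$, so $A_2 \neq 0$ automatically. The product is then $<1$ and $A_2 \succeq A_3 \succeq 0$ holds, which is item 1. Item 2 (with $A_2\neq 0$) gives $\lambda_{\max}(A_1,A_0)<1$ and $\lambda_{\max}(A_2,A_3)\le 1$, hence again the product is $<1$. In Corollary \ref{Cor36}, item 3 gives $\lambda_{\max}(A_1,A_3)\le1$ and $\lambda_{\max}(A_2,A_0)\le1$ with one inequality strict, so the product is $<1$, which is item 1. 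Item 4 gives $\lambda_{\max}(A_1,A_0+A_3)=2\lambda_{\max}(2A_1,A_0+A_3)\le 2$ and likewise for $A_2$, with one inequality strict, so the product is $<4$, which is item 2. This uses the scaling $\lambda_{\max}(cA,B)=c^{-1}\lambda_{\max}(A,B)$.

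\emph{Converses fail.} Scalar ($n=1$) counterexamples suffice, since then $\lambda_{\max}(a,b)=b/a$.
\begin{itemize}
\item Theorem \ref{ThmHurwitz3}: take $A_0=1$, $A_1=1/2$, $A_2=1$, $A_3=1/4$. Then $A_1\not\succeq A_0$, so items 2 and 3 fail, while the product is $2\cdot\frac14=\frac12<1$, so item 1 holds.
\item Corollary \ref{Cor36}, item 1 without item 3: take $A_1=1$, $A_3=2$, $A_2=1$, $A_0=1/4$. The product is $2\cdot\frac14<1$, but $A_1\not\succeq A_3$.
\item Corollary \ref{Cor36}, item 2 without item 4: take $A_0=A_3=1$, $A_1=1/2$, $A_2=4$. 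The product is $4\cdot\frac12=2<4$, but $2A_1=1\not\succeq 2$.
\end{itemize}

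The only delicate point is the semidefinite-leading-coefficient case and the role of the hypothesis $A_2\neq0$. It ensures $\lambda_{\max}(A_2,A_3)$ is defined, and Proposition \ref{Pro2} then handles it as above.
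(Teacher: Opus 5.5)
Your proof is correct and takes essentially the paper's route: you convert the Löwner inequalities into bounds $\lambda_{\max}\le 1$ (strict when the inequality is strict), you use $A_2\neq 0$ and $\ker A_2\subseteq\ker A_3$ to handle the semidefinite pencil through Proposition \ref{Pro2}, and you refute the converses with scalar examples. Your examples differ numerically from the paper's but all check out, and you also supply the Corollary \ref{Cor36} item 1 versus item 3 example that the paper omits; the one slip, citing Proposition \ref{ProPencil} instead of Proposition \ref{Pro2} for the form $\bigoplus(\lambda-\mu_i)\oplus 0$, is harmless because $A\succ 0$ already rules out the $-I_t$ block.
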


\begin{proof}
\begin{enumerate}
    \item Suppose item 2 holds, i.e., $A_1 \succ A_0$ and $A_2 \succeq A_3 \succeq 0$. Let $\mu:=\lambda_{\max}(A_1, A_0)$, then there exists a vector $x \in \mathbb{C}^n$ such that $x^*A_0x=1$ and $0=\mu x^* A_1 x - x^*A_0x > \mu -1$, having used $A_1 \succ A_0$. Hence, $\mu < 1$.  Similarly, $0 \neq A_2 \succeq A_3 \succeq 0$ implies $\lambda_{\max}(A_2, A_3) \leq 1$. (Note that $A_2 \neq 0$ is necessary to make sure that $\lambda_{\max}(A_2,A_3)$ is defined.) We conclude that item 1 is true. The proof that item 3 implies item 1 is similar and omitted. To disprove that item 1 implies any other, let $n=1$, $A_0 = A_2= 10$, $A_1 = 5$, and $A_3 = 1$.
\item We prove that item 4 implies item 2 but not vice versa, omitting the similar argument that shows the implication chain between items 3 and 1. Suppose item 4 holds. Then, $2 A_1 \succeq A_0 + A_3$ implies $\lambda_{\max}(A_1, A_0+A_3) \leq 2$. Analogously, $2 A_2 \succeq A_0 + A_3$ yields $\lambda_{\max}(A_2, A_0+A_3) \leq 2$. Since at least one inequality is strict, item 2 holds. To disprove the reverse implication, let $n=1$ and take $A_0 = 4, A_1 = 10$, $A_2 = A_3 = 1$.
\end{enumerate}
\end{proof}

\section{Numerical experiments}\label{sec:numexp}

In this section, we set up a number of numerical experiments in Monte Carlo style, to confirm the expected computational advantages of our approach. In most experiments, we compare one or more algorithms based on some of our new results with (i) the direct computation of all the eigenvalues of the companion pencil $C_1 \lambda + C_0$ (ii) the Lyapunov criterion, again tested on the companion linearization. For (i), we have used MATLAB's native {\tt polyeig} function; since {\tt polyeig} does not fully exploit the companion structure, the reader should take into account as a rule of thumb that the method in \cite{AMRVW}, for which we did not have an implementation available, is expected to be about $d$ times faster than {\tt polyeig}. (The experimental data will clarify that this factor $d$ is rather irrelevant to the conclusion that our methods are definitely much faster.) For (ii), we rely on the function {\tt lyap} available in MATLAB's Control System Toolbox. The method described in \cite{LT82}, which is very implicit making it unclear how to implement it in general, was not included in the comparison for practical reasons. All experiments have been performed in MATLAB R2019b on a MacBook Air equipped with MacOS 26.3. Except when explicitly specified otherwise,  we have generated the Hermitian coefficients $A_k$ as\footnote{We only present experiments on real symmetric matrices, both for conciseness and because this case seems more frequent in applications.  We observed similar results for Hermitian coefficients.}
\[ \texttt{ A\{k\} = randn(n); A\{k\}=A\{k\}'*A\{k\}/n + c*eye(n);} \] Here, $c=2 c_n$ and $c_n$ is the constant of \cite[Theorem 10.7]{ASNA}, discussed in Section \ref{sec:comasp}. Note that $\mathbb{E}[\diag(A_k-c I)]=I$. Hence, via e.g. the Chernoff inequality, the Cholesky decomposition on $A_k$ succeeds in double precision with overwhelming probability.

\subsection{$d=2$}

We compare a test based on Corollary \ref{Cor32}, i.e., three Cholesky decompositions, against (i) and (ii), as well as against a test based on \cite[Corollary 18]{RB} (for $p=\infty$). We report the results in Table \ref{tab:d2}; as some methods became too slow in practice for the statistical nature of the experiment, we excluded them for large input size. In practice, our procedure was as follows: We flagged when an algorithm reached an average execution time beyond $12$ seconds (this threshold is predictive, given the theoretical analysis of an $O(n^3)$ complexity, of a total execution time beyond $2$ hours in the next step). Flagged algorithms were then excluded from the comparison for larger values of $n$, and we just report N/A in the corresponding entries of the table.
\begin{table}[htbp]
\centering
\begin{tabular}{r|r|r|r|r}
\toprule
$n$ & Cor. \ref{Cor32} &  {RB} & \texttt{polyeig} & \texttt{lyap} \\
\midrule
200 & 0.0011 & 0.0028 & 0.1294 &  0.1437\\
500  & 0.0041 & 0.0110 & 2.3081 & 1.5723 \\
1000 & 0.0185 & 0.0542 & 25.950 & 7.8352 \\
2000 & 0.1143 & 0.3154 & N/A &  51.748 \\
5000 & 2.3196 & 5.2035 & N/A & N/A \\
10000 & 24.033 & 56.967 & N/A & N/A \\
\bottomrule
\end{tabular}
\caption{Average execution time (in seconds) over $100$ random trials for stability tests ($d=2$).}
\label{tab:d2}
\end{table}

\subsection{$d=3$}

Using an experimental set up similar to that for $d=2$, we compare against (i) and (ii) three tests based on, resp., items 1 and 2 in Theorem \ref{ThmHurwitz3} and item 2 in Corollary \ref{Cor36}. Note that a criterion based on item 1 in Theorem \ref{ThmHurwitz3} includes testing whether $A_2 \succeq A_3$. This property is not guaranteed by how we generate the matrices $A_k$, and the probability that it holds tends to $0$ as $n \rightarrow \infty$. Hence, terminating the algorithm based on Theorem \ref{ThmHurwitz3} after that particular test fails would generate an artificial and unfair advantage for that approach; for this reason, we always force the algorithm to also check the condition of item 1, based on the largest eigenvalues of four definite pencils. The results are in Table \ref{tab:d3}.

\begin{table}[htbp]
\centering
\label{tab:d3}
\begin{tabular}{r|rrr|r|r}
\toprule
$n$ & Thm. \ref{ThmHurwitz3}(1) & Thm. \ref{ThmHurwitz3}(2) & Cor. \ref{Cor36}(2) & \texttt{polyeig} & \texttt{lyap} \\
\midrule
200 & 0.0051 & 0.0014 & 0.0031 & 0.5741 & 0.5201 \\
500 & 0.0177 & 0.0032 & 0.0152 & 10.738 & 3.6263 \\
1000 & 0.0759	 & 0.0105 & 0.0725
 & 119.98 & 25.167 \\
2000 & 0.4730	 & 0.0812 & 0.4683 & N/A & N/A \\
5000 & 5.5545 & 1.4402 & 5.5384 & N/A & N/A \\
10000 & 81.940 & 13.700 & 84.813 & N/A & N/A\\
\bottomrule
\end{tabular}
\caption{Average execution time (in seconds) over $100$ random trials for stability tests ($d=3$).}
\end{table}

We next run a second experiment on cubic matrix polynomials, aimed at probing the relative likelihood of success between our methods based (also) on eigenvalues of definite pencils and those purely based on Cholesky decompositions. As a test case, we compare items 1 and 2 in Theorem \ref{ThmHurwitz3}, and we set the matrix size to the small value $n=10$, allowing us to execute a large number of trials in very little time. In order to create ill-conditioned coefficients, and hence a more demanding test for both sufficient conditions, for this experiment we generate the matrices $A_k$ by the command

\[ \texttt{[Q,R] = qr(randn(n));Q=Q*diag(sign(diag(R)));}\]
\[\texttt{A\{k\} = Q *diag(10.\textasciicircum(sigma * randn(n, 1))) * Q';} \]
        This ensures that the eigenvectors of $A_k$ follow the Haar (uniform) distribution on the sphere. The eigenvalues of $A_k$ are lognormally distributed, and $\sigma \in [0.2,0.4]$ is the standard deviation of their base-$10$ logarithm. As a second step, to further increase the probability that the constructed polynomials are Hurwitz stable, we multiply the central coefficients $A_1,A_2$ by a scaling factor $\gamma > 1$. Still, not all the matrix polynomials thus created are Hurwitz stable, which is undesirable in the context of this experiment because we wish to empirically estimate the probability \emph{conditioned to the input being Hurwitz stable} that items 1 and 2 in Theorem \ref{ThmHurwitz3}  are successful; otherwise, at least for certain values of $\gamma$ and $\sigma$, the unconditioned success probabilities may well be too low to provide statistically significant information about the performance gap. For this reason, as a next step, we use {\tt polyeig} as an oracle to pre-select $1000$ Hurwitz stable inputs, setting a threshold of $-10^{-10}$ for the real parts of the eigenvalues. In the final stage of the numerical experiment, we count how many Hurwitz stable inputs are correctly identified by either test. Table \ref{tab:mc3} reports the results for $15$ particular choices of $(\gamma,\sigma)$. The two parameters heavily influence the probability that the tests for stability given in Theorem \ref{ThmHurwitz3} are successful. More in detail, the smaller the value of $\gamma$ or the higher the value of $\sigma$, the more probable that the variance of the log-normal distribution causes a violation of the conditions $A_1 \succ A_0, A_2 \succ A_3$ as well as a growth of the product $\lambda_{\max}(A_1, A_0) \lambda_{\max}(A_2, A_3)$. Hence, we expect the absolute success rate of both the sufficient conditions under scrutiny to substantially decrease under these conditions. Table \ref{tab:mc3} confirms this prediction. However, precisely in this ``stress test'' regime, the pencil-based criterion (item 1 in Theorem \ref{ThmHurwitz3}) remarkably achieves a high relative success rate with respect to the more efficient, but more rigid, Cholesky-based criterion (item 2 in Theorem \ref{ThmHurwitz3}). As $\gamma$ increases or $\sigma$ decreases, the absolute detection rates of both tests predictably increase, and the ratio of success probabilities also converges towards $1$ from above. (Recall from Proposition \ref{prop:implication} that the ratios must be bounded below by $1$ for mathematical reasons.)

\begin{table}[htbp]
\centering
\setlength{\tabcolsep}{4.5pt}
\label{tab:mc3}
\begin{tabular}{@{}c|ccc|ccc|ccc@{}}
\toprule
 & \multicolumn{3}{c|}{$\sigma = 0.2$} & \multicolumn{3}{c|}{$\sigma = 0.3$} & \multicolumn{3}{c}{$\sigma = 0.4$} \\
\cmidrule(lr){2-4} \cmidrule(lr){5-7} \cmidrule(lr){8-10}
$\gamma$ & Penc & Chol & Ratio & Penc & Chol & Ratio & Penc & Chol & Ratio \\
\midrule
2 & 79 & 28 & 2.82 & 10 & 2 & 5.00 & 3 & 0 & $\infty$ \\
3 & 638 & 361 & 1.77 & 93 & 36 & 2.58 & 27 & 5 & 5.40 \\
5 & 993 & 921 & 1.08 & 571 & 325 & 1.76 & 164 & 63 & 2.60 \\
7 & 1000 & 992 & 1.01 & 860 & 641 & 1.34 & 399 & 189 & 2.11 \\
10 & 1000 & 999 & 1.00 & 987 & 892 & 1.11 & 708 & 435 & 1.63 \\
\bottomrule
\end{tabular}
\caption{Number of successes, out of $1000$ Hurwitz stable inputs, of sufficient stability conditions for $n=10$, $d=3$, across varying $\sigma$ and $\gamma$. Penc and Chol denote, resp. items 1 and 2 in Theorem \ref{ThmHurwitz3}.}
\end{table}

\subsection{$d=4$}

For the quartic case, still following the same basic ideas as for $d=2,3$, we set up an experiment to compare the running time of a test based on item 1 in Theorem \ref{ThmHurwitz4} with (i) and (ii). Since the criterion based on item 1 in Theorem \ref{ThmHurwitz4} includes testing whether $A_3 \succeq A_4$, we always force the algorithm to also check the other conditions, including the one based on the largest eigenvalues of four definite pencils. Table \ref{tab:d4} summarizes the outcome.

\begin{table}[htbp]
\centering
\label{tab:d4}
\begin{tabular}{r|r|r|r}
\toprule
$n$ & Thm. \ref{ThmHurwitz4}(1) & \texttt{polyeig} & \texttt{lyap} \\
\midrule
100 & 0.0011 & 0.1910 & 0.3178 \\
200 & 0.0021 & 1.5175 & 1.0034 \\
500 & 0.0079 & 29.986 & 7.6943 \\
1000 & 0.0351 & N/A & 55.181 \\
2000 & 0.2161 & N/A & N/A \\
5000 & 3.9394 & N/A & N/A \\
\bottomrule
\end{tabular}
\caption{Average execution time (in seconds) over $100$ random trials for stability tests ($d=4$).}
\end{table}

\subsection{Large $d$}

The previous experiments already provide ample evidence that the performance gap between the newly proposed sufficient conditions and the traditional methods is substantial, and grows as $d$ does. For this reason, we do not attempt a comparison with (i) and (ii) for very large values of $d$, because we can expect our competitors to be excruciatingly slow in this regime. Instead, we simply measure the average execution time for an algorithm based on Theorem \ref{ThmHurwitzVanni}, for varying $d$ and $n \in \{ 100, 200 \}$. Note that the execution time is identical for item $1$ and item $2$ in Theorem \ref{ThmHurwitzVanni}, because they share the same core computations and only differ in the final inequality assessment, whose computational cost is of course negligible. Figure \ref{fig:larged} displays the results and is coherent with the predicted $O(dn^3)$ complexity.

\begin{figure}[htbp]
\centering
\includegraphics[width=0.78\textwidth]{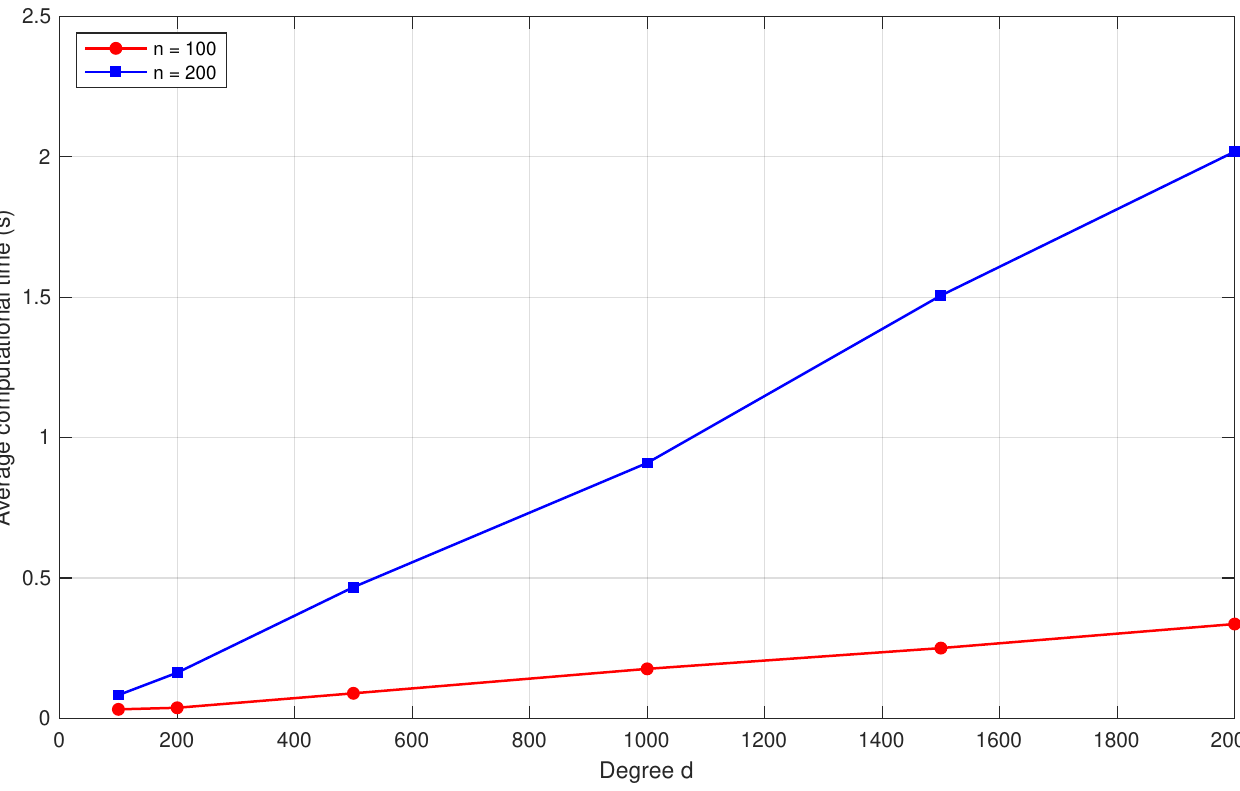}
\caption{Average execution time (in seconds) over $100$ trials to evaluate the sufficient conditions of Theorem \ref{ThmHurwitzVanni} as a function of the polynomial degree $d$. Matrix size $n = 100$ (red circles) and $n = 200$ (blue squares), degree $d = 100, 200, 500, 1000, 1500, 2000$.}
\label{fig:larged}
\end{figure}

\section{Conclusions}\label{sec:conc} 

Assessing the (Hurwitz) stability of a Hermitian matrix polynomial has applications to engineering and physics. Analytically, the problem was solved long ago \cite{LMI,LT82,Stykel}, but the resulting numerical algorithms can be uneconomical. In this paper we have systematically reviewed and improved the state of the art on efficiently computable sufficient conditions.

\section*{Acknowledgements}

We thank Matvei Zhukov who helped us translate \cite{Kl}.

\end{document}